\title[Stable and real-zero polynomials]
{Stable and real-zero polynomials in two variables}
\author[Grinshpan]{Anatolii Grinshpan}
\author[Kaliuzhnyi-Verbovetskyi]{Dmitry~S.~Kaliuzhnyi-Verbovetskyi}
\author[Vinnikov]{Victor Vinnikov}
\author[Woerdeman]{Hugo J.~Woerdeman}
\address{Department of Mathematics \\
Drexel University\\
3141 Chestnut St.\\
Philadelphia, PA, 19104}
\email{\{tolya,dmitryk,hugo\}@math.drexel.edu}
\address{Department of Mathematics\\
Ben-Gurion University of the Negev\\
Beer-Sheva, Israel, 84105} \email{vinnikov@math.bgu.ac.il}
\thanks{AG, DK-V, HW were partially supported by NSF grant DMS-0901628.
DK-V and VV were partially supported by BSF grant 2010432.}
\subjclass{15A15; 47A13, 13P15, 90C25} \keywords{Determinantal representation; multivariable polynomial;
(semi-)stable polynomial; stability radius; self-reversive polynomial; real-zero polynomial; Lax conjecture.}
\theoremstyle{plain}
\newtheorem{thm}{Theorem}[section]
\newtheorem{lem}[thm]{Lemma}
\newtheorem{prop}[thm]{Proposition}
\numberwithin{equation}{section}
\newcommand{\beq}{\begin{equation}}
\newcommand{\eeq}{\end{equation}}
\theoremstyle{remark}
\newtheorem{rem}[thm]{Remark}
\newtheorem{ex}[thm]{Example}
\newcommand{\diag}{\operatorname{diag}}
\newcommand{\spn}{\operatorname{span}}
\newcommand{\col}{\operatorname{col}}
\numberwithin{equation}{section}
\newcommand{\bbm}{\begin{bmatrix}}
\newcommand{\ebm}{\end{bmatrix}}
\renewcommand{\c}{\overline}
\begin{document}

\begin{abstract} For every bivariate polynomial $p(z_1, z_2)$ of bidegree $(n_1, n_2)$, with $p(0,0)=1$, which has no zeros  in the open unit bidisk, we construct a determinantal representation of the form 
$$p(z_1,z_2)=\det (I - K Z ),$$
where $Z$ is an $(n_1+n_2)\times(n_1+n_2)$ diagonal matrix with coordinate variables $z_1$, $z_2$ on the diagonal 
and $K$ is a contraction. We show that $K$ may be chosen to be unitary if and only if $p$ is a (unimodular) constant multiple of its reverse.
  
Furthermore, for every bivariate real-zero polynomial $p(x_1, x_2),$ with $p(0,0)=1$, we provide a construction 
to build a representation of the form $$p(x_1,x_2)=\det (I+x_1A_1+x_2A_2),$$ where $A_1$ and $A_2$ are Hermitian
matrices of size equal to the degree of $p$. 

A key component of both constructions is a stable factorization of a positive semidefinite matrix-valued polynomial 
in one variable, either on the circle (trigonometric polynomial) or on the real line (algebraic polynomial).
\end{abstract}

\maketitle

\section{Introduction}\label{sec:Intro}

Stability of multivariate polynomials is an important concept
arising in a variety of disciplines, such as Analysis, Electrical
Engineering, and Control Theory
\cite{Borcea,Wagner,BF,Kummert,Doyle,Gurvits}. In this paper, we
discuss two-variable polynomial stability with respect to the
open unit bidisk
$${\mathbb D}^2= \{(z_1, z_2) \in {\mathbb C}^2\colon|z_1|<1,\ |z_2|<1\}.$$
A bivariate polynomial will be called \textit{semi-stable} if it
has no zeros in $\mathbb D^2$, and \textit{stable} if it has no
zeros in the closure $\overline{\mathbb{D}}^2$. The
\textit{bidegree} of $p\in\mathbb{C}[z_1, z_2]$ is the pair $\deg
p=(\deg_1p, \deg_2p)$ of its partial degrees in each variable. The
\textit{reverse} of $p$ is defined as
$\overleftarrow{p}(z)=z^{\deg p}\bar{p}(1/z)$, where for
$z=(z_1,z_2)$ and $n=(n_1,n_2)$ we set $z^n=z_1^{n_1}z_2^{n_2}$,
$\bar{p}(z):=\overline{p(\bar{z})}$, $\bar{z}=(\bar{z}_1,
\bar{z}_2)$, and $1/z=(1/z_1, 1/z_2)$. A polynomial is
\textit{self-reversive} if it agrees with its reverse.\footnote{The
terminology adopted here is different from that in some other
sources. E.g., one can find in the literature ``stable" and
``strictly stable" corresponding to our ``semi-stable" and
``stable", ``dual" or ``inverse" corresponding to our ``reverse",
and ``unimodular" or ``self-inversive" corresponding to our
``self-reversive".} A semi-stable polynomial $p$ is
\textit{scattering Schur} \cite{BF} if $p$ and $\overleftarrow{p}$
are coprime, i.e., have no common factors.

For every semi-stable $p\in\mathbb{C}[z_1, z_2]$, with $p(0,
0)=1$, we construct a representation
\begin{equation}\label{eq:repr} p(z_1,z_2)=\det (I_{|n|} - K Z_n ),
\end{equation}
with $n=\deg p$ and $K$ a contraction, where $|n|=n_1+n_2$ and
$Z_n=z_1I_{n_1}\oplus z_2I_{n_2}$; see Theorem \ref{main}.
Although we follow a slightly different path to achieve this
result, we are essentially in the trail of Kummert
\cite{Kummert0,Kummert1, Kummert}, who established \eqref{eq:repr}
in the case of scattering Schur polynomials  \cite[Theorem
1]{Kummert}. Note that, given a contractive $K$, every polynomial
defined by \eqref{eq:repr} is semi-stable, so one gains practical
means of designing semi-stable bivariate polynomials. 

As an application of Theorem \ref{main}, we also establish in Theorem \ref{thm:unitary} a representation
\eqref{eq:repr} for semi-stable self-reversive polynomials, with $K$ a unitary
matrix; notice that semi-stable self-reversive polynomials are never scattering Schur.
This representation was previously established directly in \cite[Section 10]{GIK} in a somewhat different setting.

In the one-variable case, the situation is transparent: every
$p\in\mathbb{C}[z]$, with $p(0)=1$, can be written in
 the
form
$$p(z)=(1-a_1z)\cdots(1-a_nz)=\det \bigg(I_n-\bbm
a_1 &       &\\
    &\ddots &\\
    &       &a_n
\ebm\bbm
z &       &\\
    &\ddots &\\
    &       &z
\ebm\bigg),$$ where $1/a_i$ are the zeros of $p$ counting
multiplicities. Thus $p$ admits a representation \eqref{eq:repr},
with $K=\diag[a_1,\ldots,a_n]$. This representation is minimal in
size, $n=\deg p$, and in norm, $\|K\|=\max_{1\le i\le d}|a_i|$.
Observe that $p$ is semi-stable (respectively, stable) if and only
if $K=\diag[a_1,\ldots,a_n]$ is contractive (respectively,
strictly contractive); in particular, all zeros of $p$ are on the
unit circle if and only if \eqref{eq:repr} holds with $K$ a
diagonal unitary. We also note that $K$ can be chosen to have all,
with perhaps one exception, singular values equal to 1. By a
result of A. Horn \cite{AHorn}, this choice is realized by an
upper-triangular $K$ with eigenvalues $a_1, \ldots , a_n$ and
singular values equal to $1,\ldots , 1, |\prod_{i=1}^n a_i |$; see
also \cite[Theorem 3.6.6]{HornII}.

Our main result, Theorem \ref{main}, shows that in the
two-variable case we are as well able to find a representation
 \eqref{eq:repr} that is minimal both in the size and in norm of $K$. 
In particular, this means that for a two-variable 
semi-stable polynomial $p$ with $p(0,0)=1$, we can find a representation \eqref{eq:repr} with $n={\rm deg} p$ and $K$ a contraction. 

In three or more variables, such a result does not hold; indeed,
it follows from \cite[Example 5.1]{GKVW}
 that for $5/6<r<1$ the stable polynomial
$$q(z_1, z_2, z_3)=1+\frac r5\ z_1z_2z_3\bigg(z_1^2z_2^2+z_2^2z_3^2+z_3^2z_1^2-2z_1z_2z_3^2-2z_1z_2^2z_3-2z_1^2z_2z_3
\bigg), $$
does not have a representation \eqref{eq:repr} with $K$ a
contractive $9\times 9$ matrix, even though its degree is $(3,3,3)$. 
In general, the problem of finding
a representation \eqref{eq:repr} with some (not necessarily
contractive) matrix $K$ and $n=\deg p$ for a multivariable
polynomial $p$, is overdetermined (see \cite{GKVW}). It is also
unknown whether a semi-stable polynomial $p$ in more than two
variables admits a representation of the form \eqref{eq:repr} with
a contractive matrix $K$ of any size (see \cite{GKVW} for a
discussion). An alternative certificate for stability in any
number of variables is given in \cite{WLerer}.
The general problem of constructing linear determinantal representations of a polynomial is a well known
classical problem in algebraic geometry, see \cite{KV12} and the references therein.

The analog of semi-stable self-reversive polynomials for the real line
(as opposed to the unit circle in the complex plane) are {\em real-zero} polynomials
(or --- upon homegenization --- homogeneous {\em hyperbolic} polynomials first introduced by G\aa rding \cite{Ga51,Ga59}).
These polynomials and their determinantal representations were actively studied in recent years
in relation to semidefinite programming; 
we refer to \cite{Vppf} for a state of the art survey and further references.  
Using stable factorization of univariate matrix polynomials 
that are positive semidefinite on the real line, 
we construct in Section 4 a positive self-adjoint determinantal representation for two-variable real-zero polynomials,
i.e., a determinantal representation of the form $p(x_1,x_2)= p(0,0) 
\det(I+x_1A_1+x_2A_2)$ where $A_1$ and $A_2$ are complex
self-adjoint matrices of the size equal to the degree of $p$.
This reproves the main result of \cite{HV}, see also \cite{Ha}
(which amounts to the solution of the Lax conjecture for homogeneous
hyperbolic polynomials in three variables,
see \cite{LPR}), in a somewhat weaker form
(see also \cite[Section 5]{Vppf} and \cite{PV}). Indeed, in \cite{HV} it
was proven that $A_1$ and $A_2$ can be chosen to be real symmetric. The
advantage of the approach here is that the proof uses factorizations of
matrix polynomials (unlike the algebra-geometrical techniques used in \cite{HV}) 
making it especially suitable for computations.

\section{Norm-constrained determinantal representations}

Throughout the paper we will assume that the polynomials are non-constant. Although one can adjust the definitions to include the case of $p\equiv 1$,
it does not seem worth to do this. 
Given a non-constant 
bivariate polynomial $p$, its \emph{stability radius} is
defined as
$$s(p):= \max\Big\{ r>0\colon p(z)\ne0,\ z\in r{\mathbb D}^2\Big\}.$$
Thus $p$ is semi-stable if and only if $s(p)\ge 1$, and stable if
and only if $s(p)>1$.

\begin{thm}\label{main}
Let $p(z_1, z_2)$, with $p(0, 0)=1$, be a non-constant 
bivariate polynomial. Then $p$ admits a representation \eqref{eq:repr}
with $n = {\rm deg} p$ and  $\| K\| = s(p)^{-1}$. 
\end{thm}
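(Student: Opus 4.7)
The plan is as follows. First, I would reduce to the semi-stable case by rescaling. Since $p$ is non-constant, $s:=s(p)<\infty$, and $\tilde p(z_1,z_2):=p(sz_1,sz_2)$ has bidegree $n$, satisfies $\tilde p(0,0)=1$, and is semi-stable with $s(\tilde p)=1$ (the boundary zero of $p$ on $\partial(s\mathbb D^2)$ gives $s(\tilde p)\le 1$). Any contractive $\tilde K$ with $\tilde p=\det(I_{|n|}-\tilde K Z_n)$ then yields $K:=s^{-1}\tilde K$ with $p=\det(I_{|n|}-KZ_n)$ and $\|K\|\le s(p)^{-1}$. The matching lower bound $\|K\|\ge s(p)^{-1}$ is automatic: from $p=\det(I_{|n|}-KZ_n)$, the matrix $I_{|n|}-KZ_n$ is invertible whenever $\|KZ_n\|<1$, in particular on $\|K\|^{-1}\mathbb D^2$, so $p$ is nonvanishing there. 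Thus the problem reduces to: every semi-stable $p$ with $p(0,0)=1$ admits $p=\det(I_{|n|}-KZ_n)$ with $\|K\|\le 1$.

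For the semi-stable reduction I would follow a Kummert-style transfer-function construction. Viewing $p$ as a polynomial in $z_1$ of degree $n_1$ with coefficients in $\mathbb C[z_2]$, I would partition the sought $K$ conformally with $Z_n=z_1I_{n_1}\oplus z_2I_{n_2}$ into $2\times 2$ blocks and expand $\det(I_{|n|}-KZ_n)$ by a Schur-complement identity, so that producing $K$ is equivalent to producing a matrix-valued polynomial $G(z_2)$ of size and degree tied to $(n_1,n_2)$ and satisfying a concrete factorization identity on the unit circle $|z_2|=1$. The key analytic input is a stable factorization: constructing, from a naturally associated positive semidefinite matrix-valued trigonometric polynomial $F(z_2)$ built out of $p$ and $\overleftarrow p$, a matrix polynomial $G$ in $z_2$ with $F=G^*G$ on $\mathbb T$, of the correct degree, and with no zeros inside $\mathbb D$. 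Assembling $K$ from $G$ together with the coefficients of $p$, one reads off $p=\det(I_{|n|}-KZ_n)$ directly, and the fact that $G$ is stable (no zeros in $\mathbb D$) translates into $\|K\|\le 1$, while the size count $n_1+n_2$ comes from matching degrees in $z_1$ and $z_2$.

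If needed, the transition from the scattering Schur case of Kummert to arbitrary semi-stable $p$ (where $p$ and $\overleftarrow p$ may share factors) can be handled by a perturbation/compactness argument: approximate $p$ by semi-stable, scattering Schur polynomials $p_\varepsilon$ of the same bidegree with $p_\varepsilon(0,0)=1$, produce $K_\varepsilon$ as above, and extract a limit point $K$ in the compact set of contractions of size $|n|\times|n|$, concluding by continuity of $\det$.

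The principal obstacle, and the analytic engine of the paper, is the stable factorization of the matrix-valued polynomial $F$ on $\mathbb T$ with the precise degree bound that forces $K$ to have the exact size $n_1+n_2$ rather than some larger size a generic dilation would give. Classical Fej\'er--Riesz handles the scalar case even with boundary zeros, but here one needs a matrix version allowing $F$ to vanish on $\mathbb T$ (which is unavoidable when $p$ is semi-stable but not stable, notably when $p$ and $\overleftarrow p$ share common factors) while retaining sharp control on the degree of the factor $G$; this is where the argument deviates from \cite{Kummert} and does the real work.
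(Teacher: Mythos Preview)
Your reduction to the semi-stable case, the lower bound $\|K\|\ge s(p)^{-1}$, and the approximation/compactness scheme all match the paper's architecture. The paper likewise proves the existence of a contractive representation for a dense set of semi-stable polynomials (stable irreducible ones) and passes to limits; it also notes, as you do, that one could instead cite Kummert for scattering Schur polynomials and approximate.

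Where your sketch diverges from the paper is in the construction itself and in your diagnosis of the difficulty. The paper does not assemble $K$ from a Schur-complement identity for a single factor $G$. Instead it expands $p$ in powers of $z_2$, forms the companion matrix $\mathtt C(z_1)$ and the Bezoutian $Q(z_1)=\mathtt A(z_1)\mathtt A(1/\bar z_1)^*-\mathtt B(1/\bar z_1)^*\mathtt B(z_1)$, factors $Q=PP^*$ on $\mathbb T$ with $P$ invertible on $\overline{\mathbb D}$, and sets $M(z_1)=P(z_1)^{-1}\mathtt C(z_1)P(z_1)$. The matrix Fej\'er--Riesz factorization with the correct degree is \emph{not} the hard step here---it is standard, and since the paper first reduces to \emph{stable} $p$ one has $Q>0$ on $\mathbb T$, so boundary vanishing of $F$ never arises. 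The genuine technical heart, which your outline misses, is controlling the McMillan degree of $M$: a priori $M$ could have poles at the zeros of $P$ in $\mathbb C\setminus\overline{\mathbb D}$, which would force a realization larger than $n_1\times n_1$. The paper's ``Claim'' shows, via an intersecting-zeros argument (the left null vector of $P$ at such a zero is a left eigenvector of $\mathtt C$), that these potential poles cancel generically, so $M$ has McMillan degree $n_1$ and admits a minimal contractive realization $M(z_1)=D+Cz_1(I_{n_1}-Az_1)^{-1}B$, giving $K=\begin{bmatrix}A&B\\C&D\end{bmatrix}$ of the exact size $|n|$.

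So your plan is correct at the coarse level, but the ``precise degree bound'' you flag is not where the work is; the missing idea is the pole-cancellation argument that pins down the McMillan degree of the similarity $P^{-1}\mathtt C P$.
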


 Before we delve into the bivariate case, let us
consider an alternative way to obtain \eqref{eq:repr} in the
univariate case that does not require us to compute the roots of
$p$. Let $p(z) = p_0 + \cdots + p_n z^n$ be a stable polynomial.
Then the classical matrix theory says that $p$ is the
characteristic polynomial of the associated companion matrix.
Writing this in a form especially useful for our purposes, we have
$$ p(z) = p_0 \det(I_n - z C_p), $$
where
\begin{equation}\label{Cp}  C_p = \bbm- \frac{p_1}{p_0} & 1 & & 0 \cr \vdots & & \ddots &\cr -\frac{p_{n-1}}{p_0} &
0 & & 1\cr - \frac{p_{n}}{p_0} & 0 & \cdots & 0 \ebm.
\end{equation} As $p$ is stable, all the eigenvalues of $C_p$ lie
in ${\mathbb D}$. Thus $C_p$ is similar to a strict contraction. For our purposes, it will suffice to find a
similarity to a (not necessarily strict) contraction.
To this end, we proceed by introducing the Bezoutian
$$ Q = AA^* - B^* B, $$ where
\begin{equation*} A =
\bbm p_0 & & \cr \vdots & \ddots & \cr p_{n-1} & \cdots & p_0
\ebm,\quad  B = \bbm p_{n} & \cdots & p_{1} \cr & \ddots & \vdots
\cr & & p_{n} \ebm.  \end{equation*}

The Schur--Cohn criterion  (see, e.g., \cite[Section 13.5]{LT}) tells us that $p$ is stable if and only if $Q>0$.
If we now factor $Q= PP^*$,
with $P$ a square (and thus automatically invertible) matrix,
then $K=P^{-1}C_pP$ is a contraction \cite{WLerer}.
To see this, one shows that $\bbm Q^{-1} &
C_p^*Q^{-1}\\Q^{-1} C_p &
Q^{-1}\ebm$ and consequently
$P(I-K^*K)P^*$ are positive semi-definite. Since the range of $K^*$ is contained in
the range of $P^*$, it follows that $\|K\|\le 1$.
We now have the
desired representation $p(z)=p_0 \det (I_n - z K )$.\\

This alternative derivation of a representation \eqref{eq:repr} in
a univariate case provides the basis for the construction of
\eqref{eq:repr} in the bivariate case, where now the coefficients
$p_i$ and the matrices $C_p$, $A$, $B$, $Q$, and $P$ will depend
on one of the variables.

We will need the following two lemmata.

\begin{lem}\label{prod} If the polynomials $p_1$ and $p_2$ admit the representation \eqref{eq:repr}
as in Theorem \ref{main}, then so does their product.
\end{lem}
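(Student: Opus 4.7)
The plan is a direct sum construction with a permutation bookkeeping step, so that the $z_1$-blocks and $z_2$-blocks of the two representations line up in the combined representation. There is nothing deep; the only thing to get right is the shape of $Z_n$ after rearrangement, and the stability radius computation.

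Write $n^{(i)} = \deg p_i = (n_1^{(i)}, n_2^{(i)})$ and assume the given representations
$$p_i(z_1,z_2) = \det\bigl(I_{|n^{(i)}|} - K^{(i)} Z_{n^{(i)}}\bigr), \qquad \|K^{(i)}\| = s(p_i)^{-1}, \quad i=1,2.$$
Set $n = n^{(1)} + n^{(2)}$. Since $\mathbb{C}[z_1,z_2]$ is an integral domain, bidegree is additive under multiplication, so $\deg(p_1 p_2) = n$. My first step is to form $\widetilde K := K^{(1)} \oplus K^{(2)}$, which is a contraction of size $|n^{(1)}| + |n^{(2)}| = |n|$ with norm $\max(\|K^{(1)}\|,\|K^{(2)}\|)$, and to observe that
$$\det\bigl(I_{|n|} - \widetilde K \,(Z_{n^{(1)}} \oplus Z_{n^{(2)}})\bigr) = p_1(z_1,z_2)\, p_2(z_1,z_2).$$

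Next I would introduce the permutation matrix $P$ that re-orders the direct sum $Z_{n^{(1)}} \oplus Z_{n^{(2)}}$ (whose diagonal lists $z_1, z_2, z_1, z_2$ in blocks of sizes $n_1^{(1)}, n_2^{(1)}, n_1^{(2)}, n_2^{(2)}$) into $Z_n = z_1 I_{n_1} \oplus z_2 I_{n_2}$. That is, $P^{T}(Z_{n^{(1)}} \oplus Z_{n^{(2)}})P = Z_n$ after an appropriate choice of $P$. Setting $K := P\widetilde K P^{T}$ and using $\det(I - \widetilde K M) = \det(I - \widetilde K P^{T} Z_n P) = \det(I - P\widetilde K P^{T} Z_n) = \det(I - K Z_n)$, I obtain
$$p_1(z_1,z_2)\, p_2(z_1,z_2) = \det\bigl(I_{|n|} - K Z_n\bigr).$$
Conjugation by a real orthogonal (in fact permutation) matrix preserves the operator norm, so $\|K\| = \|\widetilde K\| = \max(\|K^{(1)}\|,\|K^{(2)}\|)$.

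Finally I would verify that this common value equals $s(p_1 p_2)^{-1}$. Since the zero set of $p_1 p_2$ is the union of those of $p_1$ and $p_2$, the stability radius satisfies $s(p_1 p_2) = \min\bigl(s(p_1), s(p_2)\bigr)$, hence
$$s(p_1 p_2)^{-1} = \max\bigl(s(p_1)^{-1}, s(p_2)^{-1}\bigr) = \max\bigl(\|K^{(1)}\|,\|K^{(2)}\|\bigr) = \|K\|,$$
completing the proof. The only ``obstacle'' is purely notational, namely keeping track of the block permutation so that the combined diagonal has exactly $n_1 = n_1^{(1)} + n_1^{(2)}$ copies of $z_1$ followed by $n_2 = n_2^{(1)} + n_2^{(2)}$ copies of $z_2$; once that is set up, the norm and determinant identities are immediate.
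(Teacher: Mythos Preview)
Your argument is correct and essentially identical to the paper's own proof: form the direct sum $K^{(1)}\oplus K^{(2)}$, conjugate by a permutation to align the $z_1$- and $z_2$-blocks, and read off both the determinant identity and the norm equality $\|K\|=\max(\|K^{(1)}\|,\|K^{(2)}\|)=s(p_1p_2)^{-1}$. There is a harmless $P$/$P^T$ mix-up in your display (from $P^T(Z_{n^{(1)}}\oplus Z_{n^{(2)}})P=Z_n$ one gets $Z_{n^{(1)}}\oplus Z_{n^{(2)}}=P Z_n P^T$ and hence $K=P^T\widetilde K P$), but this is purely notational and does not affect the substance.
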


\begin{proof} Although we formulate and
prove the statement for bivariate polynomials, it is obviously extended to any
number of variables.

Observe that if $n^{\prime}=\deg p_1$  and $n^{\prime\prime}=\deg p_2$,
then $\deg (p_1p_2) = n^{\prime}+n^{\prime\prime}$ and $s(p_1p_2)
= \min \{ s(p_1) , s(p_2) \}$. If $p_1=\det (I_{|n^{\prime}|}-K_1
Z_{n^{\prime}})$ and $p_2 = \det (I_{|n^{\prime\prime}|}-K_2
Z_{n^{\prime}} )$, then $p_1p_2=\det
(I_{|n^{\prime}+n^{\prime\prime}|}- \tilde{K} \tilde{Z})$, where
$\tilde{K}=K_1\oplus K_2$ and $\tilde{Z}=Z_{n^{\prime}}\oplus
Z_{n^{\prime\prime}}$. Applying a permutation  $T$ which
rearranges the coordinate variables on the diagonal of $Z$, we
obtain that $p_1p_2=\det (I_{|n^{\prime}+n^{\prime\prime}|}- K
Z_{n^{\prime}+n^{\prime\prime}})$ with $K=T\tilde{K}T^{-1}$ and
$Z_{n^{\prime}+n^{\prime\prime}}=T\tilde{Z}T^{-1}$. Since $T$ is
unitary, we obtain that
\begin{multline*}
\|K\|=\|\tilde{K}\|=\max\{\|K_1\|,\|K_2\|\}\\
=\max\{s(p_1)^{-1},s(p_2)^{-1}\}=( \min \{ s(p_1) , s(p_2)
\})^{-1}=s(p_1p_2)^{-1}.
\end{multline*}
\end{proof}
Recall that a polynomial $p$ is called \emph{irreducible} if it
has no nontrivial polynomial factors.
\begin{lem}\label{lem:dense} Stable irreducible polynomials of a fixed bidegree with the constant term $1$
are dense in the set of semi-stable polynomials of the same bidegree with the constant term $1$
\end{lem}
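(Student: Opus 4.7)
My plan is to proceed in two steps: first approximate a semi-stable polynomial by stable polynomials of the same bidegree and constant term, and then approximate any stable polynomial by stable \emph{irreducible} polynomials.

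For the first step, given a semi-stable $p$ of bidegree $(n_1,n_2)$ with $p(0,0)=1$, set $p_r(z_1,z_2):=p(rz_1,rz_2)$ for $0<r<1$. Since $p$ has no zeros in $\mathbb{D}^2$, $p_r$ has no zeros in $r^{-1}\mathbb{D}^2\supset\overline{\mathbb{D}}^2$, hence $p_r$ is stable. Multiplying the coefficient of $z_1^i z_2^j$ by $r^{i+j}>0$ leaves the support of $p$ unchanged, so the bidegree and the value $p_r(0,0)=1$ are both preserved, and $p_r\to p$ coefficient-wise as $r\to 1^-$.

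For the second step, view the set $V$ of polynomials of bidegree at most $(n_1,n_2)$ with constant term $1$ as a complex affine space of dimension $(n_1+1)(n_2+1)-1$. The stable polynomials of exact bidegree $(n_1,n_2)$ form a Euclidean-open subset $\mathcal{S}\subset V$, because having exact bidegree is a Zariski-open condition and non-vanishing on the compact set $\overline{\mathbb{D}}^2$ is an open condition on the coefficients. The reducible polynomials in $V$ form a finite union, taken over nontrivial partitions $(n_1,n_2)=(k_1,k_2)+(n_1-k_1,n_2-k_2)$, of images of multiplication maps $W_{k_1,k_2}\to V$, where $W_{k_1,k_2}$ parameterizes pairs of polynomials of the indicated bidegrees each normalized to have constant term $1$. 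A direct dimension count gives
\[
\dim V - \dim W_{k_1,k_2} = k_1(n_2-k_2)+k_2(n_1-k_1)\ge 1
\]
for every nontrivial partition (assuming $n_1,n_2\ge 1$); hence the irreducible polynomials form a Zariski-open Zariski-dense subset of $V$, and in particular they are Euclidean-dense in every nonempty Euclidean-open subset of $V$, including $\mathcal{S}$.

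The main obstacle is the dimension count just quoted: one has to verify that each multiplication map $W_{k_1,k_2}\to V$ is generically one-to-one (so image dimension equals source dimension), which follows from the unique factorization of coprime pairs once the constant-term normalization is fixed. A secondary point is the passage from Zariski to Euclidean density, which relies on the standard fact that a proper algebraic subvariety of a complex affine space has empty Euclidean interior.
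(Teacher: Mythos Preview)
Your proposal is correct and follows essentially the same two-step strategy as the paper: first dilate $p$ to $p_r(z_1,z_2)=p(rz_1,rz_2)$ to obtain stable approximants, then use that reducible polynomials of bidegree at most $(n_1,n_2)$ with constant term $1$ form a finite union of images of lower-dimensional parameter spaces, so irreducibles are Zariski-open and Euclidean-dense. The paper states the dimension drop tersely as $N'+N''<N$, whereas you give the explicit formula $k_1(n_2-k_2)+k_2(n_1-k_1)\ge 1$ (valid when $n_1,n_2\ge 1$); note that generic injectivity of the multiplication map is not actually needed, since the upper bound $\dim(\text{image})\le\dim W_{k_1,k_2}<\dim V$ already suffices.
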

While we formulate and prove the statement here for bivariate polynomials, it can be obviously extended to any
number of variables.
We view polynomials of bidegree $(n_1, n_2)$ or less 
with constant term $1$ as points in the coefficient
 space $\mathbb C^N$, where $N=(n_1+1)(n_2+1)-1$.
Notice that if a sequence of polynomials in $\mathbb C^N$ converges to a polynomial of
 bidegree $(n_1, n_2)$, then the polynomials in the sequence will eventually have the same bidegree.
Notice also that stable polynomials form an open set in $\mathbb C^N$.
\begin{proof}[Proof of Lemma \ref{lem:dense}]
Observe that the set of reducible
polynomials of bidegree $(n_1, n_2)$ or less with constant term $1$ (the products of polynomials of
smaller bidegrees) 
is a finite union of images of polynomial mappings 
${\mathbb C}^{N'} \times {\mathbb C}^{N^{\prime\prime}} \to \mathbb C^N$ where $N' + N^{\prime\prime} < N$.
Hence irreducible polynomials
form an open dense subset of $\mathbb C^N$. Every semi-stable
polynomial $p$ is a limit of stable dilations
$p_r(z_1,z_2):=p(rz_1, rz_2)$ as $r \uparrow 1$. On the other
hand, we can approximate $p_r$ by irreducible polynomials in
$\mathbb{C}^N$. If an irreducible polynomial $q$ is sufficiently
close to $p_r$, then $q$ is also stable. We conclude that every
semi-stable polynomial in $\mathbb{C}^N$ can be approximated by
stable irreducible polynomials; moreover, a semi-stable polynomial
of bidegree $(n_1,n_2)$ can be approximated by stable irreducible
polynomials of the same bidegree.
\end{proof}

We will make use of the 1D system realization theory. A univariate
matrix-valued rational function $f$ is said to have a
(finite-dimensional) \textit{transfer-function realization} if
\begin{equation}\label{eq:tf}
f(z) = D + Cz (I-Az)^{-1}B
\end{equation}
for some complex matrix $\bbm A& B \cr C& D \ebm$. A realization
\eqref{eq:tf} of $f$ is called \textit{minimal} if the block $A$
is of minimal possible size. Every rational matrix-valued function
$f$ which is analytic and contractive on $\mathbb D$ has a minimal
realization \cite{KFA}; moreover, the system matrix
 $\bbm  A & B\\ C & D \ebm$ of a minimal realization of $f$ can be chosen to be contractive  \cite{Arov}.

\begin{proof}[Proof of Theorem \ref{main}]  Without loss of generality, we may assume that $s(p)=1$,
i.e., $p$ is semi-stable. Indeed, otherwise one proves the result
for the semi-stable $q$, where $q(z)=p(zs(p))$, resulting in a
contraction $K_q$, and then put $K=K_q/s(p)$, to get the desired
representation for
 $p$.

Next, if we can show the existence of a representation $\det
(I_{|n|}-KZ_n)$, with $K$ a contraction, for a dense subset of
semi-stable polynomials of bidegree $n$ with constant term $1$, then we are done.
Indeed, if $p^{(j)} = \det (I_{|n|}-K^{(j)} Z_n)$, with $K^{(j)}$
a contraction, and $p^{(j)} \to p$, then $p=\det (I_{|n|} - K Z_n
)$, where $K$ is a limit point of the sequence $\{ K^{(j)} \colon
j \in{\mathbb N} \}$ (which exists as the contractions in
${\mathbb C}^{|n|\times |n|}$ form a compact set). Thus, we are
allowed to make some generic assumptions on $p$. For starters, by
Lemma \ref{lem:dense}, we may assume that $p$ is stable and
irreducible.

We now start the proof of the existence of a representation \eqref{eq:repr} with $n=\deg p$ and $\| K \| \le 1$
for an irreducible stable polynomial $p$. Along the way, we make some other assumptions of genericity.

Expand $p$ in the powers of $z_2$, $p(z_1, z_2) = p_0( z_1 ) + \cdots + p_{n_2}( z_1) z_2^{n_2}$,
and introduce the companion matrix
\begin{equation}\label{Cjdef2}  \mathtt C (z_1) = \bbm- \frac{p_1(z_1)}{p_0(z_1)} & 1 & & 0 \cr \vdots &
& \ddots &\cr -\frac{p_{n_2-1}(z_1)}{p_0(z_1)} & 0 & & 1\cr
- \frac{p_{n_2}(z_1)}{p_0(z_1)} & 0 & \cdots & 0 \ebm, \end{equation}
and the triangular Toeplitz matrices
\begin{equation*} \mathtt A(z_1) =
\bbm p_0 (z_1)& & \cr \vdots & \ddots & \cr p_{n_2-1} (z_1) &
\cdots & p_0(z_1) \ebm,\quad \mathtt B(z_1) = \bbm p_{n_2} (z_1) &
\cdots & p_{1} (z_1) \cr & \ddots & \vdots \cr & & p_{n_2} (z_1)
\ebm.  \end{equation*} Form the Bezoutian $Q(z_1) := \mathtt
A(z_1) \mathtt A(1/\c z_1)^* - \mathtt B(1/\c z_1)^* \mathtt
B(z_1)$.

Since the polynomial $p(z_1, \cdot)$ is stable for every
$z_1\in\mathbb T$, we have that $Q(z_1)$ is positive definite for
every $z_1\in\mathbb T$ \cite[Section 13.5]{LT}. Then there exists
a $n_2\times n_2$ matrix-valued polynomial $P(z_1) = P_0 + \cdots
+ P_{n_1} z_1^{n_1}$ such that the factorization $Q(z_1) =
P(z_1)P(z_1)^*$, $z_1 \in {\mathbb T}$, holds and $P(z_1)$ is
invertible for every $z_1 \in\c{\mathbb D}$ \cite{Rosen, DR}.
Since $p_0(z_1)=p(z_1,0)\neq 0$ for every
$z_1\in\overline{\mathbb{D}}$, the rational matrix-valued function
$$ M(z_1) := P(z_1)^{-1}\mathtt C(z_1) P(z_1)$$ is analytic on
$\overline{\mathbb D}$. In fact, $M$ is also contractive there
\cite{WLerer}.
To see this, one shows that $\bbm Q(z_1)^{-1} &
\mathtt C(z_1)^*Q(z_1)^{-1}\\Q(z_1)^{-1}\mathtt C(z_1) &
Q(z_1)^{-1}\ebm$ and consequently
$P(z_1)(I-M(z_1)^*M(z_1))P(z_1)^*$ are positive semi-definite for
$z_1\in\mathbb T$. Since the range of $M(z_1)^*$ is contained in
the range of $P(z_1)^*$, it follows that $\|M(z_1)\|\le 1$ for
every $z_1\in\mathbb{T}$, and by the maximum principle, for every
$z_1\in\overline{\mathbb{D}}$.

{\em Claim:} Generically, the only poles of $M$ are the zeros of
$p_0$.

{\em Proof of claim}. We will first show that when $P$ has a zero
at $z_1=a$ of geometric multiplicity 1,  the corresponding vector
in the left kernel is a left eigenvector of $\mathtt C(a)$.
Indeed, first observe that by analytic continuation,
$Q(z_1)=P(z_1)P(1/\bar{z}_1)^*$, where the analyticity domains of
the rational matrix-valued functions on the two sides of the
equality coincide. Then the zeros of $P$ are exactly the zeros of
$Q$ that lie in ${\mathbb C} \setminus \overline{\mathbb D}$. Let
$z_1\neq 0$. Observe that
$$ R(z_1) := \begin{bmatrix} \mathtt A(z_1) & \mathtt B(1/\bar{z_1})^* \cr \mathtt B(z_1) & \mathtt A (1/\bar{z_1})^*
\end{bmatrix}$$
is the resultant for the polynomials $p(z_1, \cdot )$ and $g(\cdot )$, where
$$  g(z_2) =\overleftarrow{p} (z_1, z_2)/z_1^{n_1}=
 \overline{p_{n_1,n_2}}/z_1^{n_1} + \cdots + \overline{p_{00}} z_2^{n_2}. $$
 Thus
$\det R(z_1) =  0 $ if and only if $p(z_1,z_2) = 0
=\overleftarrow{p} (z_1, z_2)$ for some $z_2$ (in the terminology
of \cite{GW, GW2}: $(z_1, z_2)$ is an {\em intersecting zero}).
For such $z_1$ and $z_2$ we have that the row vector $v_{2n_2-1}
(z_2)$ is in the left kernel of $R(z_1)$; here
$$ v_k (z) = \begin{bmatrix} 1 & z & \cdots & z^k \end{bmatrix}  $$ When $\overline{p_0} (1/z_1) \neq 0$, we have that
$\mathtt A (1/\bar{z_1})^*$ is invertible, and
$$ \begin{bmatrix} v_{n_2-1} (z_2) & z_2^{n_2} v_{n_2-1} (z_2 ) \end{bmatrix}
  \begin{bmatrix} \mathtt A(z_1) & \mathtt B(1/\bar{z_1})^* \cr \mathtt B(z_1) & \mathtt A (1/\bar{z_1})^*
   \end{bmatrix} = 0 $$
implies
\begin{equation}\label{gen}  v_{n_2-1} (z_2) Q(z_1) = 0. \end{equation}
Notice that we used that $\mathtt A(w_1)^*$ and $\mathtt B(w_2)$
commute as they are both upper triangular Toeplitz matrices. Thus
when $P$ has a zero at $z_1=a$ of geometric multiplicity 1, its
vector in the left kernel is $v_{n_2-1}(z_2)$, where $(a,z_2)$ is
an intersecting zero\footnote{Alternatively, one may use the
formula
$$ \frac{ p(z_1 , z_2) \overline{p(1/\bar{z_1}, z_2 )} - 
\overleftarrow{p} (z_1 , z_2) \overline{\overleftarrow{p}
(1/\bar{z_1}, z_2 )}}{1-|z_2|^2} = v_{n_2-1}(z_2) Q(z_1) v_{n_2
-1} (z_2)^* $$ to come to the same conclusion. This formula can be
easily checked by hand, but also appears in many sources;
 see, e.g., \cite[Section 4]{KVM}.}. It is now straightforward to check that $v_{n_2-1}(z_2)$ is a
  left eigenvector of $\mathtt C(a)$ corresponding to the eigenvalue $1/z_2$.

To show that the only poles of $M$ are the zeros of $p_0$, we
observe that the only other possible source of poles of $M$ would
be the zeros of $P$. Assuming (a generic condition!) that such a
zero $a$ has multiplicity 1 and is not a zero of $p_0$, we obtain
that
$$ P(z_1) = P_0 + P_1 (z_1-a) + \cdots +P_{n_1} (z_1-a)^{n_1},\quad P(z_1)^{-1} = S_{-1}/(z_1-a) + S_0 + S_1 (z_1-a) +
\cdots, $$  where $\dim {\rm Ker} P_0 = 1$ and ${\rm rank} S_{-1}
= 1$ (see, e.g., \cite[Chapter II]{BGK}). In addition, as $P(z_1)$
and $P(z_1)^{-1}$ multiply to $I_{n_2}$, we have $P_0 S_{-1} = 0 =
S_{-1} P_0$. By the result of the previous paragraph, we must have
that $S_{-1}= w v_{n_2-1}(z_2)$, for some column vector $w$. But
then we have that
$$ M(z_1) = P(z_1)^{-1} \mathtt C(z_1) P(z_1) = S_{-1} \mathtt C(a) P_0/(z_1-a) + G(z_1), $$
where $G(z_1)$ is analytic in a neighborhood of $a$. Since
$$v_{n_2-1} (z_2) \mathtt C(a) P_0 = (1/z_2) v_{n_2-1} (z_2) P(a) = 0,$$
we have $ S_{-1} \mathtt C(a) P_0 = 0$, and thus $M(z_1)$ does not
have a pole at $a$. This proves the claim.

We assume now for $p$ the generic assumptions above and, in
addition, the assumption that $p_{n_2}$ is of degree $n_1$ and is
coprime with $p_0$, which is also generic. Then the McMillan
degree of $\mathtt C$, and hence, of $\mathtt M$, is $n_1$,
therefore there exists
 a minimal contractive realization of $M$ with $A$ a $n_1\times n_1$ matrix:
$$ M(z_1) = D + Cz_1 (I_{n_1}-Az_1)^{-1} B $$
(see \cite[Section 4.2]{BGK} or \cite[Sections 4.1, 4.2]{BGR} for the notion of the McMillan degree of a
rational matrix-valued function and its equality to the size of a
minimal realization of the function). We have
$$p(z_1, z_2)=p_0(z_1)\det(I_{n_2}-M(z_1)z_2)=\frac{p_0(z_1)}{\det(I_{n_1}-z_1A)}\
\det\left(I_{|n|}-\bbm A & B\\ C & D\ebm\bbm z_1I_{n_1}& 0\\ 0 &
z_2I_{n_2}\ebm\right).$$ As $\det(I_{n_1}-z_1A)$ is the
denominator of the coprime fraction representation of $\det M$
\cite[Section 4.2]{BGR}, it follows that $p_0(z_1)=\det(I_{n_1}-z_1A)$. This
proves that
$$p(z_1, z_2)= \det\left(I_{|n|}-\bbm A & B\\ C & D\ebm\bbm
z_1I_{n_1}& 0\\ 0 & z_2I_{n_2}\ebm\right),$$ and we are done.
\end{proof}

Notice that the proof outlines a procedure to find a representation \eqref{eq:repr}. Let us try this out on a simple 
example.

\begin{ex} Let $p(z_1,z_2) = 1+az_1+bz_2$, where $a+b<1$ and $a, b>0$. Then $p_0(z_1) = 1+az_1 $ and $p_1(z_1) = b$.
We have
$$ M(z_1) =\mathtt{C}(z_1)=- \frac b{1+a z_1} =- b+\sqrt{ab}\ z_1(1+az_1)^{-1}\sqrt{ab}$$
and obtain the representation $p(z_1, z_2) = \det \left( I_2 -
\bbm -a & \sqrt{ab} \cr \sqrt{ab} & -b \ebm \bbm z_1 & 0 \cr 0 &
z_2 \ebm \right)$ with a contraction $\bbm -a & \sqrt{ab} \cr
\sqrt{ab} & -b \ebm $.
\end{ex}

Kummert \cite{Kummert0,Kummert1,Kummert} proved Theorem \ref{main}
for bivariate scattering Schur polynomials. He first constructed
for such a polynomial $p$ a \emph{2D Givone--Roesser system
realization} \cite{GR} of $f=\overleftarrow{p}/p$:
\begin{equation}\label{eq:GR}
f=D+CZ_n(I_{|n|}-AZ_n)^{-1}B,
\end{equation}
with the complex $(|n|+1)\times(|n|+1)$ matrix $\begin{bmatrix} A
& B\\
C & D
\end{bmatrix}$ being unitary, and then wrote it as
$$\frac{\overleftarrow{p}}{p}=\frac{\det\begin{bmatrix}
I_{|n|}-AZ_n
& B\\
-CZ_n & D
\end{bmatrix}}{\det(I_{|n|}-AZ_n)}.$$ Since the fraction
representation on the left-hand side is coprime and the bidegree
of the polynomial in the denominator of the fraction on the
right-hand side is less than or equal $n=(n_1,n_2)$ (in the
componentwise sense), the denominators must be equal:
$$p=\det(I_{|n|}-AZ_n).$$
Since $A$ is a contraction, Theorem \ref{main} follows for
this case, with $K=A$. We also remark that in this construction $K$ has all singular values, except one, equal to 1.

Let us note that the existence of 2D Givone--Roesser unitary system realizations
 was proved by Agler \cite{Ag} for a much more general class of contractive analytic operator-valued
 functions on
 the bidisk $\mathbb{D}^2$, however the (unitary) system matrix in such a realization has, in general,
 infinite-dimensional Hilbert-space operator blocks (in particular, $A$ is a contraction on an infinite-dimensional
  Hilbert space).
 Kummert's result in the special case of scalar rational inner functions is sharper in the sense that it provides
 a concrete finite-dimensional unitary realization of the smallest possible size. We also remark that an
 alternative construction of a finite-dimensional  Givone--Roesser unitary system realization for
 matrix-valued rational inner functions of two-variables is given in \cite{BSV}.

 The general case of Theorem \ref{main} can also be deduced from the special case of scattering Schur polynomials, since
 the latter is a dense set in the space of all bivariate polynomials of bidegree $n=(n_1,n_2)$ 
 with the constant term $1$, and the
 approximation argument as in our proof of Theorem \ref{main} works. In view of Lemma \ref{lem:dense}, it suffices
 to prove the following proposition.
 \begin{prop}\label{prop:SS-dense}
Every  stable irreducible polynomial is scattering Schur.
 \end{prop}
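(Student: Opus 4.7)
The plan is to argue by contradiction: suppose the stable irreducible polynomial $p$ shares a nontrivial factor with $\overleftarrow{p}$. Since $p$ is irreducible, this forces $p\mid\overleftarrow{p}$. Before anything else I would verify that $\overleftarrow{p}$ has the same bidegree $(n_1,n_2)$ as $p$: writing $p=\sum p_{ij}z_1^iz_2^j$ with $p_{00}=1$, one has $\overleftarrow{p}=\sum\overline{p_{ij}}z_1^{n_1-i}z_2^{n_2-j}$, whose leading coefficient is $\overline{p_{00}}=1\ne 0$. Consequently the quotient $\overleftarrow{p}/p$ is a constant $c$, and matching the $z_1^{n_1}z_2^{n_2}$ and constant coefficients on both sides forces $|c|=1$, so in fact $\overleftarrow{p}=cp$.

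I would then derive a contradiction by restricting to a slice $\{z_1\}\times\mathbb{C}$ for $z_1\in\mathbb{T}$. Since $p$ is non-constant, we may assume $n_2\ge 1$ (otherwise swap variables). The leading $z_2$-coefficient $p_{n_2}(z_1)=\sum_i p_{i,n_2}z_1^i$ is a nonzero polynomial (as $p$ has $z_2$-degree exactly $n_2$), so it vanishes on only finitely many points of $\mathbb{T}$. Pick any $z_1\in\mathbb{T}$ outside this finite set; then $p(z_1,\cdot)$ has degree exactly $n_2\ge 1$, with roots $a_1,\dots,a_{n_2}$ all satisfying $|a_j|>1$ by stability of $p$. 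Using $1/\overline{z_1}=z_1$ on $\mathbb{T}$, the definition gives $\overleftarrow{p}(z_1,z_2)=z_1^{n_1}z_2^{n_2}\overline{p(z_1,1/\overline{z_2})}$, so as a polynomial in $z_2$, $\overleftarrow{p}(z_1,\cdot)$ is $z_1^{n_1}$ times the $z_2$-reverse of $p(z_1,\cdot)$; its roots are $1/\overline{a_1},\dots,1/\overline{a_{n_2}}$, all of modulus strictly less than $1$. But $\overleftarrow{p}=cp$ forces these two multisets of roots to coincide, which is impossible since $\{|z|>1\}\cap\{|z|<1\}=\emptyset$ and $n_2\ge 1$.

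I expect the only subtlety to be the bidegree bookkeeping: one must confirm that $\overleftarrow{p}$ really has bidegree $(n_1,n_2)$, so that $p\mid\overleftarrow{p}$ leaves a scalar quotient, and that $p(z_1,\cdot)$ retains its full $z_2$-degree for generic $z_1\in\mathbb{T}$. Both reduce to the normalization $p(0,0)=1$ together with the definition of bidegree, so they are essentially formal and the proof should be short.
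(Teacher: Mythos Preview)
Your proof is correct and follows essentially the same route as the paper's: assume $p\mid\overleftarrow{p}$, restrict to a one-variable slice with the other variable on $\mathbb{T}$, and obtain a contradiction from the locations of the zeros (yours in $\{|z|>1\}$, the reverse's in $\{|z|<1\}$). Your version is in fact slightly tidier---by first establishing $\overleftarrow{p}=cp$ via the bidegree count, you avoid the paper's separate treatment of the one-variable case---but the underlying argument is the same.
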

\begin{proof}
We will prove the statement here for bivariate polynomials, but it can obviously
  be extended to any number of variables.

 Suppose $p$ is a stable irreducible bivariate polynomial and is not scattering Schur. Then it must divide
 $\overleftarrow{p}$.

 If $p$ is a nontrivial polynomial depending only on
 one of the variables, say $z_1$,
  then it has the form $p(z_1,z_2)=a(z_1-z_0)$, with $a\in\mathbb{C}\setminus\{0\}$ and
  $z_0\in\mathbb{C}\setminus\overline{\mathbb{D}}$, and
  $\overleftarrow{p}(z_1,z_2)=-\bar{a}\overline{z_0}(z_1-1/\overline{z_0})$, which is impossible.

   If $p$ depends on both $z_1$ and $z_2$,
    then it is possible to fix one of the variables,
    say $z_2=\lambda$, on the unit circle $\mathbb{T}$ so that $q(z_1)=p(z_1,\lambda)$ is a nontrivial polynomial in
    $z_1$. Then $q$ has no zeros in $\overline{\mathbb{D}}$ and, since the polynomial
    $q(z_1)$ divides $\overleftarrow{p}(z_1,\lambda)$, $q$
    can not have zeros in $\mathbb{C}\setminus\overline{\mathbb{D}}$, a contradiction to the Fundamental
    Theorem of Algebra.

    Thus, $p$ is scattering Schur.
\end{proof}

We note that another stability criterion for bivariate
polynomials was established in \cite[Theorem 1.1]{Knese2008}. Namely, it was shown that $p$ is stable
if and only if
\begin{equation}\label{Knese} |p(z_1,z_2)|^2-|\overleftarrow{p} (z_1 , z_2)|^2\ge c (1-|z_1|^2)
(1-|z_2|^2), \ z_1, z_2 \in {\mathbb D},   \end{equation} for some $c>0$. Moreover, when $p$ is stable,
 one may choose $$ c = 4\pi \left( \int_{0}^{2\pi} \int_{0}^{2\pi} \frac{1}{|p(e^{i\theta} , e^{i\psi})|^2}
  d\theta d\psi \right)^{-1}. $$
A $d$-variable generalization of \eqref{Knese} may be found in \cite[Theorem 5.1]{Bickel}.

\section{The case of self-reversive polynomials}
Given a semi-stable polynomial $p$, one has the
factorization
$p=us,$ where  $u$ is a semi-stable self-reversive polynomial and $s$ is a scattering Schur polynomial
\cite[Theorem 4]{BF}. In the case where $p$ is semi-stable and self-reversive, the factor $s$ is a constant. Our next theorem
specializes the result of Theorem \ref{main} to this case. We first
establish several equivalent conditions for a semi-stable polynomial to be self-reversive;
while we formulate and prove the next proposition for bivariate polynomials, it is clear that it extends
to any number of variables.

\begin{prop}\label{prop:sssr}
Let $p$ be a semi-stable bivariate polynomial of bidegree $n=(n_1,n_2)$ with
$p(0,0)=1$; then the following statements are
equivalent:
\begin{itemize}
\item[(i)] $p$ is self-reversive up to a unimodular constant;
\item[(ii)] the coefficient of $z_1^{n_1}z_2^{n_2}$ in $p$ is unimodular;
\item[(iii)] if $(z_1, z_2) \in {\mathbb T}^2$, the one-variable polynomial $t\mapsto p(tz_1, tz_2)$
has all its zeros on ${\mathbb T}$.
\end{itemize}
\end{prop}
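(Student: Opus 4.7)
The plan is to establish the cyclic implications (i) $\Rightarrow$ (ii) $\Rightarrow$ (iii) $\Rightarrow$ (i). The first is almost immediate: from $p = c \overleftarrow{p}$ with $|c|=1$, comparing constant terms gives $1 = p(0,0) = c\,\overline{p_{n_1 n_2}}$, so $|p_{n_1 n_2}|=1$.

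For (ii) $\Rightarrow$ (iii), I would fix $(z_1, z_2) \in \mathbb{T}^2$ and look at the univariate polynomial $q(t) = p(tz_1, tz_2)$. Its coefficient of $t^{n_1+n_2}$ is $p_{n_1 n_2} z_1^{n_1} z_2^{n_2}$, which by (ii) has modulus $1$; thus $\deg q = n_1 + n_2$ with unimodular leading coefficient, and $q(0) = 1$. Semi-stability ensures $q \neq 0$ on $\mathbb{D}$, so every root $\alpha_j$ satisfies $|\alpha_j| \geq 1$. Comparing $|q(0)|$ with the factored form yields $\prod_j |\alpha_j| = 1$, forcing every $|\alpha_j| = 1$.

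The main work is (iii) $\Rightarrow$ (i). I would introduce the total-degree shells $P_k(w_1, w_2) = \sum_{i+j=k} p_{ij} w_1^i w_2^j$, so that for $w = (w_1,w_2) \in \mathbb{T}^2$, $q_w(t) := p(tw_1, tw_2) = \sum_k P_k(w)\, t^k$. By (iii) each $q_w$ has all its zeros on $\mathbb{T}$ and satisfies $q_w(0) = 1$, hence is self-inversive: setting $d(w) = \deg_t q_w$, one obtains $|P_{d(w)}(w)| = 1$ together with the palindromic identity
\[
P_k(w) = P_{d(w)}(w)\,\overline{P_{d(w)-k}(w)},\qquad 0 \leq k \leq d(w).
\]
Letting $m = \max\{i+j : p_{ij} \neq 0\}$, on the dense open set $\{w \in \mathbb{T}^2 : P_m(w) \neq 0\}$ we have $d(w) = m$, so $|P_m(w)| = 1$ there; continuity of $|P_m|$ extends this to all of $\mathbb{T}^2$, and in particular $P_m$ vanishes nowhere on $\mathbb{T}^2$. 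Since $P_m$ is homogeneous of degree $m$, writing $P_m(w_1, w_2) = w_1^m r(w_2/w_1)$ reduces $|P_m| \equiv 1$ to $|r| \equiv 1$ on $\mathbb{T}$; a short Fourier argument applied to $|r|^2 = 1$ then forces $r$ to be a monomial, giving $P_m(w) = c\, w_1^{a} w_2^{b}$ with $a + b = m$ and $|c|=1$.

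The delicate step, and the one I expect to require the most care, is translating the palindromic identity back into a relation on the coefficients of $p$. Substituting the monomial form of $P_m$ into $P_k(w) = P_m(w)\,\overline{P_{m-k}(w)}$ and regarding both sides as Laurent polynomials in $(w_1, w_2)$ on $\mathbb{T}^2$, the uniqueness of the Fourier expansion forces $p_{ij} = 0$ whenever $i > a$ or $j > b$. Since $\deg_1 p = n_1$ and $\deg_2 p = n_2$, this pins down $a = n_1$ and $b = n_2$, so $m = n_1 + n_2$. Matching the remaining coefficients then yields $p_{ij} = c\,\overline{p_{n_1-i,\,n_2-j}}$ for every $(i,j)$, which is exactly $p = c\overleftarrow{p}$ with $|c|=1$, so (i) follows.
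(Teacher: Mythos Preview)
Your proof is correct and follows essentially the same route as the paper: the same cyclic implications, the same use of homogeneous components $P_k$ (the paper's $p_k$), the same continuity argument showing $|P_m|\equiv 1$ on $\mathbb{T}^2$, and the same conclusion $(a,b)=(n_1,n_2)$ by comparing partial degrees. The only noteworthy difference is the justification that $P_m$ is a monomial: the paper invokes Rudin's characterization of rational inner functions, whereas you give a self-contained one-variable Fourier argument via $P_m(w_1,w_2)=w_1^m r(w_2/w_1)$ and $|r|\equiv 1$ on $\mathbb{T}$; this makes your version slightly more elementary but is otherwise the same argument.
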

\begin{proof}
(i)$\Rightarrow$(ii) is obvious, since
for a bivariate polynomial $p$ of bidegree $n=(n_1,n_2)$,
the free term of $\overleftarrow{p}$ equals the conjugate of
the coefficient of $z_1^{n_1}z_2^{n_2}$ in $p$.

(ii)$\Rightarrow$(iii) Let $p(z_1,z_2)=\sum_{j=0}^rp_j(z_1,z_2)$, $p_0(z_1,z_2)=1$, be
the expansion of $p$ in homogeneous polynomials. Then
\begin{equation} \label{eq:onevarpoly}
p_{(z_1,z_2)}(t):=p(tz_1,tz_2)=\sum_{j=0}^rp_j(z_1,z_2)t^j.
\end{equation}
If the coefficient $p_{n_1,n_2}$ of $z_1^{n_1}z_2^{n_2}$ is unimodular,
then $r=n_1+n_2$ and $p_r(z_1,z_2)=p_{n_1,n_2}z_1^{n_1}z_2^{n_2}$.
For $(z_1,z_2)\in\mathbb{T}^2$, we can write
$$
p_{(z_1,z_2)}(t) = (1-a_1(z_1,z_2)t) \cdots (1-a_r(z_1,z)t),
$$
where $1/a_1(z_1,z_2),\ldots,1/a_r(z_1,z_2)$ are the roots of $p_{(z_1,z_2)}$ counting multiplicities.
Because of semi-stability, $|a_i(z_1,z_2)| \geq 1$;
but $p_r(z_1,z_2) = (-1)^r a_1(z_1,z_2) \cdots a_r(z_1,z_2)$
is unimodular, hence $|a_i(z_1,z_2)| = 1$.

(iii)$\Rightarrow$(i) Let $p_{(z_1,z_2)}$ be as in \eqref{eq:onevarpoly}.
By the assumption, for every $(z_1,z_2)\in\mathbb{T}^2$ the
polynomial $p_{(z_1,z_2)}$ is self-reversive up to a unimodular
constant, hence $p_r(z_1,z_2)$ is either zero or unimodular. Since
the polynomial $p_r$ is nonzero, it is not identically zero on
$\mathbb{T}^2$ (e.g., by the uniqueness principle for bivariate
analytic functions). By continuity, $p_r(z_1,z_2)$ is unimodular,
and thus $\deg p_{(z_1,z_2)}=r$ for every
$(z_1,z_2)\in\mathbb{T}^2$. It follows (e.g., by Rudin's
characterization of rational inner functions \cite[Theorem
5.2.5]{Rudin}) that $p_r$ is a monomial:
$$p_r(z_1,z_2)=p_{m_1,m_2}z_1^{m_1}z_2^{m_2},$$ with
$(m_1,m_2)\le(n_1,n_2)$, $|m|=r$, and $|p_{m_1,m_2}|=1$.

Now, the fact that for $(z_1,z_2)\in\mathbb{T}^2$ the
polynomial $p_{(z_1,z_2)}$ is self-reversive up to a unimodular
constant implies that
$$
\overline{p_{r-j}(z_1,z_2)} = \overline{p_r(z_1,z_2)} p_j(z_1,z_2)
$$
for $(z_1,z_2)\in\mathbb{T}^2$ and $j=0,\ldots,r$,
and therefore by analytic continuation
$$
\overline{p_{r-j}\Big(\frac{1}{\bar z_1},\frac{1}{\bar z_2}\Big)} =
\overline{p_r\Big(\frac{1}{\bar z_1},\frac{1}{\bar z_2}\Big)} p_j(z_1,z_2)
$$
for all $z_1,z_2 \neq 0$ and $j=0,\ldots,r$.
It follows that
$$
\overline{p\Big(\frac{1}{\bar z_1},\frac{1}{\bar z_2}\Big)}
=  \overline{p_r\Big(\frac{1}{\bar z_1},\frac{1}{\bar z_2}\Big)} p(z_1,z_2),
$$
and finally, since $p_r(z_1,z_2)=p_{m_1,m_2}z_1^{m_1}z_2^{m_2}$,
$$
z_1^{m_1}z_2^{m_2} \overline{p\Big(\frac{1}{\bar z_1},\frac{1}{\bar z_2}\Big)} = \overline{p}_{m_1,m_2} p(z_1,z_2).
$$
Comparing the degrees of $z_1$ and $z_2$ we see that $n_1=m_1$, $n_2=m_2$,
and $p$ is self-reversive up to the unimodular constant $\overline{p}_{m_1,m_2}$.
\end{proof}

\begin{thm}\label{thm:unitary}
Let the bivariate polynomial $p$ of bidegree $n=(n_1,n_2)\neq (0,0)$, with
$p(0,0)=1$, be semi-stable. Then $p$ is self-reversive up to a unimodular constant
if and only if $p$ admits a representation \eqref{eq:repr} with $n=\deg p$ and  $K$ unitary.
\end{thm}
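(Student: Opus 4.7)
The plan is to reduce both directions of the theorem to a single algebraic observation: for any $|n|\times|n|$ matrix $K$, the coefficient of the top monomial $z_1^{n_1}z_2^{n_2}$ in $\det(I_{|n|}-KZ_n)$ equals $(-1)^{|n|}\det K$. I would derive this from the standard principal-minor expansion $\det(I+X)=\sum_{S}\det(X_{S,S})$ applied with $X=-KZ_n$; since $Z_n$ is diagonal, each term factors as $(-1)^{|S|}\det(K_{S,S})\prod_{j\in S}(Z_n)_{jj}$, and the monomial $z_1^{n_1}z_2^{n_2}$ can only arise from the full index set $S=\{1,\dots,|n|\}$.

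For the easy direction, assume $p=\det(I_{|n|}-KZ_n)$ with $K$ unitary. Then $|\det K|=1$, so by the identity the top coefficient of $p$ is unimodular (and nonzero, which also certifies that $p$ really has bidegree $(n_1,n_2)$). The implication (ii)$\Rightarrow$(i) in Proposition \ref{prop:sssr} then gives that $p$ is self-reversive up to a unimodular constant, and semi-stability of $p$ is already known since every unitary matrix is a contraction.

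For the nontrivial direction, I apply Theorem \ref{main} to the semi-stable polynomial $p$ to obtain a representation $p=\det(I_{|n|}-KZ_n)$ with $n=\deg p$ and $\|K\|=s(p)^{-1}\le 1$. The self-reversivity hypothesis together with the implication (i)$\Rightarrow$(ii) of Proposition \ref{prop:sssr} says that the coefficient of $z_1^{n_1}z_2^{n_2}$ in $p$ is unimodular, which by the identity above means exactly $|\det K|=1$. The conclusion then drops out of the singular-value identity
$$|\det K|=\prod_{i=1}^{|n|}\sigma_i(K),$$
since $\sigma_i(K)\le\|K\|\le 1$ for every $i$, and the constraint $|\det K|=1$ forces each $\sigma_i(K)$ to equal $1$, i.e., $K$ is unitary.

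I do not foresee a real obstacle: the whole argument rests on identifying the coefficient of a single monomial in the determinantal representation. The only subtle point worth flagging is that Theorem \ref{main} delivers only a contractive $K$ even when $s(p)=1$, but the self-reversive hypothesis, via the unimodularity of $\det K$, automatically upgrades $K$ from merely contractive to unitary.
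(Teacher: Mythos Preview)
Your proof is correct and essentially matches the paper's: the nontrivial direction is identical (apply Theorem~\ref{main}, observe that the modulus of the coefficient of $z_1^{n_1}z_2^{n_2}$ equals $|\det K|$, and use the singular-value product to force $K$ unitary). For the easy direction the paper instead computes $\overleftarrow{p}=\det(-K^*)\,p$ directly from the representation rather than invoking Proposition~\ref{prop:sssr}, but this is a cosmetic difference.
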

\begin{proof}
The proof in one direction is immediate.
If $p=\det(I_{|n|}-KZ_n)$, with $K$ unitary and $n=\deg p$, then
\begin{multline*}
\overleftarrow{p}(z)=z^{n}\det(I_{|n|}-\overline{K}Z_n^{-1})=z^{n}\det(I_{|n|}-Z_n^{-1}K^*)
=\det Z_n\det(I_{|n|}-Z_n^{-1}K^*)\\
=\det(Z_n-K^*)
=\det(-K^*)\det(I_{|n|}-KZ_n)=\alpha p(z),
\end{multline*}
with $\alpha=\det(-K^*)\in\mathbb{T}$.

Conversely, assume that $p$ is self-reversive up to a unimodular constant,
or equivalently (by Proposition \ref{prop:sssr}) that the coefficient of $z_1^{n_1}z_2^{n_2}$ in $p$ is unimodular.
By Theorem \ref{main}, $p$ has a
representation \eqref{eq:repr} with a contractive $K$. Observe
that the modulus of
the coefficient of $z_1^{n_1}z_2^{n_2}$ equals $|\det K |$, which in turn equals the product of the singular values
of $K$. As $|\det K |=1$,  all singular values of $K$ must be equal to 1, yielding that $K$ is unitary.
\end{proof}

We notice that the procedure outlined in the proof of Theorem \ref{main} to find a representation 
\eqref{eq:repr} does not work for self-reversive polynomials as the Bezoutian $Q$ is $0$, 
and a limiting process (as in the beginning of the proof of Theorem \ref{main}) is necessary.

The non-trivial direction of Theorem \ref{thm:unitary} was previously established directly in \cite{GIK}
(generalizing the determinantal representations considered in \cite{AM}). 
More precisely, \cite[Theorem 10.5]{GIK} establishes that 
a bivariate polynomial $p$ of bidegree $(n_1,n_2)$ with $p(0,0)=1$, 
having no irreducible factors of the form $\alpha z_1 + \beta$, $\alpha,\beta \in {\mathbb C}$, 
admits a representation \eqref{eq:repr} with $n=\deg p$ and $K$ unitary, provided that
$p(z_1,z_2)=0$ and $|z_1|=1$ imply $|z_2|=1$, and $p(t,0)$ is a stable polynomial in $t$ (in the notation of
\cite[Theorem 10.5]{GIK}, this is the special case $n_2=0$, so $n_1=n$). 
It suffices therefore to notice the following proposition.

\begin{prop} \label{prop:semistabselfrev_vs_gik}
Let $p$ be a bivariate polynomial of bidegree $(n_1,n_2)$ with $p(0,0)=1$,
having no irreducible factors of the form $\alpha z_1 + \beta$, $\alpha,\beta \in {\mathbb C}$.
Then the following are equivalent:
\begin{itemize}
\item[(i)] $p$ is semi-stable and self-reversive up to a unimodular constant;
\item[(ii)] $p(z_1,z_2)=0$ and $|z_1|=1$ imply $|z_2|=1$, and $p(t,0)$ is stable in $t$.  
\end{itemize}
\end{prop}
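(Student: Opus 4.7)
For $(i)\Rightarrow(ii)$, I would argue by contradiction using Hurwitz's theorem. Suppose a zero $(z_1^0,z_2^0)$ of $p$ has $|z_1^0|=1$ and $|z_2^0|\ne 1$. If $|z_2^0|<1$, Hurwitz's theorem applied to the family $z_2\mapsto p(z_1,z_2)$ (whose coefficients depend continuously on $z_1$) produces a zero near $z_2^0$ for every $z_1$ in a neighborhood of $z_1^0$; choosing such a $z_1\in\mathbb{D}$ yields a zero of $p$ in $\mathbb{D}^2$, contradicting semi-stability. If $|z_2^0|>1$, the self-reversive relation $\overleftarrow{p}=\alpha p$ combined with $|z_1^0|=1$ yields the additional zero $(z_1^0,1/\overline{z_2^0})$ whose second coordinate lies in $\mathbb{D}$, reducing to the previous case. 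Stability of $p(t,0)$ is then immediate: a zero with $|t|<1$ would violate semi-stability (since $(t,0)\in\mathbb{D}^2$), and a zero with $|t|=1$ would violate the just-proved first half of~(ii) because $|0|\ne 1$.

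For the converse $(ii)\Rightarrow(i)$, I would establish semi-stability first by a continuity-of-roots argument. For $z_2\in\mathbb{D}$, condition (ii).1 ensures $p(\cdot,z_2)$ has no zeros on $\mathbb{T}$, so by the argument principle the number of zeros of $p(\cdot,z_2)$ in $\mathbb{D}$ is locally constant in $z_2\in\mathbb{D}$; stability of $p(\cdot,0)$ makes this count vanish at $z_2=0$, hence identically zero on the connected set $\mathbb{D}$, and therefore $p\ne 0$ on $\mathbb{D}^2$.

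The subtler step, and the one I expect to be the main obstacle, is self-reversiveness up to a unimodular constant, and it is here that the hypothesis ruling out linear factors in $z_1$ enters. For each $z_1^0\in\mathbb{T}$ with $p_{n_2}(z_1^0)\ne 0$, condition (ii).1 forces all $n_2$ zeros of the degree-$n_2$ polynomial $p(z_1^0,\cdot)$ to lie on $\mathbb{T}$; writing $p(z_1^0,z_2)=p_{n_2}(z_1^0)\prod_k(z_2-\alpha_k)$ with $|\alpha_k|=1$ and computing $\overleftarrow{p}(z_1^0,z_2)$ directly shows that $\overleftarrow{p}(z_1^0,z_2)/p(z_1^0,z_2)$ is independent of $z_2$ and equals $\overleftarrow{p_0}(z_1^0)/p_{n_2}(z_1^0)$, where $p_0(z_1):=p(z_1,0)$ and $\overleftarrow{p_0}(z_1):=z_1^{n_1}\overline{p_0(1/\bar z_1)}$. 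Clearing denominators yields the polynomial identity $\overleftarrow{p}(z_1,z_2)\,p_{n_2}(z_1)=\overleftarrow{p_0}(z_1)\,p(z_1,z_2)$, valid on a cofinite subset of $\mathbb{T}\times\mathbb{C}$ and hence, by comparing coefficients in $z_2$ and noting that polynomials in $z_1$ agreeing on an infinite set are equal, on all of $\mathbb{C}^2$. Since every irreducible factor of $p_{n_2}(z_1)\in\mathbb{C}[z_1]$ has the form $\alpha z_1+\beta$, the hypothesis on $p$ forces $\gcd(p,p_{n_2})=1$ in $\mathbb{C}[z_1,z_2]$, so by Gauss's lemma $p\mid\overleftarrow{p}$; matching bidegrees gives $\overleftarrow{p}=\alpha p$ for a constant $\alpha$, which is unimodular because $|\overleftarrow{p}|=|p|$ on $\mathbb{T}^2$.
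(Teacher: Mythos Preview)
Your proof is correct and takes a genuinely different, more elementary route than the paper's. The paper first reduces to irreducible $p$, then works on the desingularizing Riemann surface $X$ of the zero set: for (i)$\Rightarrow$(ii) it uses that the coordinate function $z_1$ is an open mapping on $X$ (your Hurwitz argument is the elementary analogue of this); for (ii)$\Rightarrow$(i) it quotes \cite[Lemma~10.6]{GIK} for self-reversiveness and then argues semi-stability via the dividing-type decomposition $X=X_+\cup X_{\mathbb R}\cup X_-$ and the observation that $1/z_2$ is inner on $X_+$. By contrast, you avoid both the Riemann-surface machinery and the external citation: your argument-principle continuity argument handles semi-stability directly, and your computation that $\overleftarrow{p}(z_1,\cdot)/p(z_1,\cdot)$ is constant for generic $z_1\in\mathbb T$, followed by the UFD step exploiting the ``no $\alpha z_1+\beta$ factors'' hypothesis, gives self-reversiveness from scratch. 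Your approach makes the role of that hypothesis more transparent (it is precisely what forces $\gcd(p,p_{n_2})=1$) and is entirely self-contained; the paper's approach, while less explicit, ties the result to the underlying algebro-geometric structure invoked elsewhere in \cite{GIK}.
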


\begin{proof}
Both (i) and (ii) are inherited by the irreducible factors of $p$, so we may assume without loss
of generalty that $p$ is irreducible with $n_2>0$.
We denote by $X$ the desingularizing Riemann surface of the projective closure of the zero set $Z_p$ of $p$ in
${\mathbb C}^2$, and we abuse the notation by letting $z_1$ and $z_2$ denote both the coordinates in ${\mathbb C}^2$
and the corresponding meromorphic functions on $X$.

(i) $\implies$ (ii) 
The second condition in (ii) is obvious.
Assume by contradiction that $p(z^0)=0$, $z^0=(z^0_1,z^0_2)$, $|z^0_1|=1$, $|z^0_2| \neq 1$.
Since $p$ is self-reversive, we may assume that $|z^0_2|<1$. Let $\xi^0 \in X$ lie above $z^0$.
Since $z_1$ is a non-constant meromorphic function on $X$, it is an open mapping, hence there exists $\xi \in X$
near $\xi^0$ such that $|z_1(\xi)|<1$ and $|z_2(\xi)|<1$ contradicting the semi-stability of $p$. 
 
(ii) $\implies$ (i)
\cite[Lemma 10.6]{GIK} shows that $p$ is self-reversive up to a unimodular constant.
Since $p$ is self-reversive, $X$ is endowed with an anti-holomorphic involution $\tau$, and the coordinate functions
$z_1$ and $z_2$ on $X$ are unimodular meromorphic functions (i.e., $1/\overline{z_1(\tau(\xi))}=z_1(\xi)$ and
similarly for $z_2$). The condition
$p(z_1,z_2)=0$ and $|z_1|=1$ imply $|z_2|=1$
implies further that $X$ is of dividing type, i.e., $X = X_+ \cup X_{{\mathbb R}} \cup X_-$, where the union is disjoint
and the three subsets in the decomposition are the preimages under $z_1$ of ${\mathbb D}$, ${\mathbb T}$, and 
the exterior of the unit disc (including $\infty$), respectively. 
Now, $p(t,0)$ is stable in $t$
means simply that all the zeroes of $z_2$ on $X$ lie in $X_-$, therefore $z_2$ is an inverse of an inner function
on $X_+$. So: $z_1$ is inner and $1/z_2$ is inner on $X_+$, implying that $p$ is semi-stable.
\end{proof}

We sketch now the construction of \cite{GIK} as adapted to our case, which is both simpler and more feasible
from a computational viewpoint than the general situation considered there.
Suppose $p\in\mathbb{C}[z_1,z_2]$ is a semi-stable self-reversive polynomial with $p(0,0)=1$.  
By Lemma \ref{prod}, it suffices to assume that $p$ is irreducible and obtain a representation \eqref{eq:repr} with
$\deg p=(n_1,n_2)$ and $K$ unitary for this case. 
We assume that $n_2>0$, the case $n_2=0$ being trivial.
It follows \cite[Lemma 10.7]{GIK}
that the polynomial $\overleftarrow{\partial p/\partial z_2}$ is semi-stable and
$\frac{z_2\partial p/\partial z_2}{\overleftarrow{\partial
p/\partial z_2}}$ is a coprime fraction representation of a
rational inner function. Then it is well known (see
\cite{Kummert,GW,BSV,Knese2008}) that there exist bivariate
polynomials $A_1$, \ldots, $A_{n_1}$ of bidegree $(n_1-1,n_2)$ or
less, and bivariate polynomials $B_1$, \ldots, $B_{n_2}$ of
bidegree $(n_1,n_2-1)$ or less, not all of them equal 0, such that
\begin{multline}\label{eq:CD}
 \overleftarrow{\frac{\partial p}{\partial z_2}} (z_1,z_2)  \overline{\overleftarrow{\frac{\partial p}{\partial z_2}}
  (w_1,w_2)}- z_2 \overline{w_2}  \frac{\partial p}{\partial z_2} (z_1, z_2) \overline{ \frac{\partial p}{\partial z_2}
  (w_1, w_2) }\\
 = (1-z_1 \overline{w_1}) \sum_{i=1}^{n_1}A_i(z_1,z_2)  \overline{ A_i(w_1,w_2)} + (1-z_2 \overline{w_2})
  \sum_{j=1}^{n_2}B_j(z_1,z_2)  \overline{ B_j(w_1,w_2)}
\end{multline}
(this replaces a more general decomposition with ``negative squares'' provided by \cite[Theorem 10.1]{GIK});
furthermore, these polynomials can be found using semidefinite programming software.
It is straightforward to verify the identity
$n_2p=\frac{\overleftarrow{\partial p}}{\partial z_2}+z_2\frac{\partial p}{\partial z_2}$,
 which implies that the left-hand side of \eqref{eq:CD} is equal to
$$n_2^2p(z_1,z_2)\overline{p(w_1,w_2)}-n_2z_2\frac{\partial p}{\partial z_2}(z_1,z_2)
\overline{p(w_1,w_2)}-n_2p(z_1,z_2)\overline{w_2\frac{\partial p}{\partial z_2}(w_1,w_2)}.$$
When both $(z_1,z_2)$ and $(w_1,w_2)$ lie in the zero set $Z_p$ of the polynomial $p$,
this expression and, thus, the left-hand side of \eqref{eq:CD} are equal to 0. Then we
use the standard ``lurking isometry'' argument. We first rewrite the equality \eqref{eq:CD}
restricted to $Z_p\times Z_p$ as
$$\begin{bmatrix}
A(w_1,w_2)\\
B(w_1,w_2)
\end{bmatrix}^*\begin{bmatrix}
A(z_1,z_2)\\
B(z_1,z_2)
\end{bmatrix}=\begin{bmatrix}
w_1A(w_1,w_2)\\
w_2B(w_1,w_2)
\end{bmatrix}^*\begin{bmatrix}
z_1A(z_1,z_2)\\
z_2B(z_1,z_2)
\end{bmatrix},$$
where $A(z_1,z_2):=\col_{i=1,\ldots,n_1}[A_i(z_1,z_2)]$ and $B(z_1,z_2):=\col_{i=1,\ldots,n_2}[B_i(z_1,z_2)]$.
Then we observe that this identity uniquely determines an isometry $$T\colon\spn\left\{\begin{bmatrix}
z_1A(z_1,z_2)\\
z_2B(z_1,z_2)
\end{bmatrix}\colon (z_1,z_2)\in Z_p\right\}\to\spn\left\{\begin{bmatrix}
A(z_1,z_2)\\
B(z_1,z_2)
\end{bmatrix}\colon (z_1,z_2)\in Z_p\right\},$$
defined on generating vectors by
$$T\colon\begin{bmatrix}
z_1A(z_1,z_2)\\
z_2B(z_1,z_2)
\end{bmatrix}\mapsto\begin{bmatrix}
A(z_1,z_2)\\
B(z_1,z_2)
\end{bmatrix}$$
and then extended by linearity.
We shall see a posteriori that in fact the span of the vectors on the right-hand side is all of ${\mathbb C}^{|n|}$,
so that the isometry $T$ is a unitary mapping of $\mathbb{C}^{|n|}$ onto itself.
At any rate, $T$ can be extended to a unitary mapping $K$ of $\mathbb{C}^{|n|}$ onto itself, i.e.,
to a $|n| \times |n|$ unitary matrix.

The nonzero polynomial  $P=\begin{bmatrix}
A\\
B
\end{bmatrix}\in\mathbb{C}^{|n|}[z_1,z_2]$ does not vanish identically on $Z_p$.
Indeed, B\'ezout's theorem \cite[p. 112]{Fulton} says that two
bivariate polynomials with no common factors can have at most a
finite number of common zeros equal to the product of total
degrees of the polynomials. Therefore, if $P$ vanishes identically
on $Z_p$, then the irreducible polynomial $p$ should divide every
component of $P$, but since these components, $A_i$,
$i=1,\ldots,n_1$, and $B_j$, $j=1,\ldots,n_2$, are polynomials of
smaller bidegree than $p$, this is impossible. Moreover, the set
$Z_p\setminus Z_P$ is Zariski relatively open and dense in $Z_p$.
Since the polynomial
 $q=\det (I_{|n|}-KZ_n)$ vanishes on this set, it vanishes on $Z_p$ as well. Applying B\'ezout's
 theorem
 again, we see that $p$ divides $q$. Since
 $\deg q=\deg p=n$ and $p(0)=q(0)=1$, we must have $p=q$, i.e., $p$ has a representation \eqref{eq:repr} with
 $n=\deg p$ and $K$ unitary. This provides an alternative proof of the non-trivial direction in
 Theorem \ref{thm:unitary}.

We notice that the restriction of
$P=\begin{bmatrix}
A\\
B
\end{bmatrix}\in\mathbb{C}^{|n|}[z_1,z_2]$ to $Z_p$
is a section of the kernel bundle of the determinantal representation $I_{|n|}-KZ_n$ of the irreducible
polynomial $p$, see \cite{V89,KV12}.
It follows (essentially since such a section is generated by the columns of the adjoint matrix
$\operatorname{adj}(I_{|n|}-KZ_n)$) that the entries of the restriction of $P$ to $Z_p$ are linearly
independent, in other words there exists no nonzero $c \in {\mathbb C}^{1 \times |n|}$ such that
$c P(z_1,z_2) = 0$ for all $(z_1,z_2) \in Z_p$.
Therefore the span of $P(z_1,z_2)$, $(z_1,z_2) \in Z_p$, is all of ${\mathbb C}^{|n|}$,
so that the isometry $T=K$ is already a unitary mapping of $\mathbb{C}^{|n|}$ onto itself
and no extension is needed.

We illustrate this on the following example.

\begin{ex} Let $p(z_1, z_2) = 1-z_1z_2 -\frac{1}{2} z_1^2 -\frac{1}{2} z_2^2 + z_1^2z_2^2$, so that $\deg p=(2,2)$.
We compute
$$ \frac{\partial p}{\partial z_2} (z_1, z_2) = -z_1 -z_2 + 2z_1^2 z_2,\quad
 \overleftarrow{\frac{\partial p}{\partial z_2}} (z_1,z_2) = 2 -z_1z_2 -z_1^2 ,
$$ and find (using semidefinite programming software)
$$ A_1(z_1,z_2)= \sqrt{2} (1-z_1z_2),\quad A_2(z_1,z_2)= z_1-z_2,$$ $$ B_1(z_1,z_2)=\sqrt{2}(1-z_1^2),\quad
  B_2(z_1,z_2) =z_1+z_2-2z_1^2z_2, $$
so that \eqref{eq:CD} holds.
Taking the zeros $(0, \sqrt{2})$, $(\sqrt{2}, 0)$,
$(\frac{1}{2}, -1+\frac{3}{\sqrt{2}})$, $( -1+\frac{3}{\sqrt{2}}, \frac{1}{2})$, we find that the unitary $K=T$ is
the matrix
$$ K= \frac{1}{\sqrt{2}} \begin{bmatrix} 0& 1 & 0 & 1\cr 1 & 0 & -1 & 0 \cr 0 & -1 & 0 & 1 \cr 1 & 0 & 1 & 0
\end{bmatrix}. $$
One can easily check that $p(z_1, z_2) = \det (I_4 - K Z_{(2,2)}).$
\end{ex}

\section{Real-zero polynomials and self-adjoint determinantal representations}

We consider bivariate {\it real-zero polynomials}, 
which are polynomials $p\in\mathbb{R}[x_1,x_2]$ 
with the property that for every $(x_1,x_2) \in {\mathbb R}^2$ 
the one-variable polynomial $p_{(x_1,x_2)}(t):=p(tx_1,tx_2)$ has only real zeros. 
In \cite[Theorem 2.2]{HV} it was shown that every real-zero polynomial $p$ with $p(0,0)=1$ may be represented as 
\begin{equation}\label{RZP} 
p(x_1,x_2)=\det (I+ x_1A_1 + x_2A_2), \end{equation}
where $A_1,A_2 \in {\mathbb R}^{d\times d}$ are symmetric matrices and $d$ is the total degree of $p$;
in the homogeneous setting of {\em hyperbolic polynomials} this statement was known as the Lax conjecture,
see \cite{LPR}. We refer to \cite{Vppf} for a detailed survey and further references.
The proof in \cite{HV}
is based on the results of \cite{Vin93} and \cite{BV99}, see also \cite{Dub83},
and uses algebro-geometrical techniques --- the correspondence between (certain)
determinantal representations of an irreducible plane curve and line bundles on its desingularization,
together with a detailed analysis of the action of the complex conjugation on the Jacobian variety
and the theory of Riemann's theta function;
a new proof, using instead the theory of quadratic forms, has been discovered recently in \cite{Ha}.
A somewhat weaker statement --- namely, the existence of a representation  \eqref{RZP} 
where now $A_1,A_2 \in {\mathbb C}^{d\times d}$ are {\em Hermitian} matrices --- has been established
recently in \cite[Section 5]{Vppf} and \cite{PV}; these proofs are also algebro-geometrical but
avoid the transcendental machinery of Jacobian varieties and theta functions. 
In this section (Theorem \ref{RZ}), 
we provide a new proof (actually, two closely related proofs) of the existence of a representation  \eqref{RZP} 
with $A_1,A_2 \in {\mathbb C}^{d\times d}$ Hermitian matrices using factorizations of matrix valued polynomials. 
One advantage of our proof is 
that it provides a fairly constructive way to find such a representation. 
The most involved step is finding a stable factorization for a one-variable matrix polynomial 
that is positive semidefinite on the real line. 
As the latter can be implemented using any semidefinite programming package or 
a Riccati equation solver (see, e.g., \cite{HW} or \cite[Section 2.7]{BW}),
this construction can be easily implemented numerically, for instance in Matlab. 
For more on computational questions related to the construction of determinantal representations
of real-zero polynomials, 
see \cite{Hen10,PSV1,PSV2,LP}.
By a simple trick,
Theorem \ref{RZ} also implies the existence of a $2d \times 2d$ real symmetric representation for $p^2$ --- 
see Remark \ref{rem:p-square}.

\begin{thm}\label{RZ}
Let $p$ be a bivariate real-zero polynomial of total degree $d>0$ with $p(0,0)=1$. 
Then there exist $d\times d$ Hermitian matrices $A_1$ and $A_2$ so that \eqref{RZP} holds.
\end{thm}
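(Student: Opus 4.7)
My plan is to adapt the blueprint of the proof of Theorem \ref{main} to the real-line setting: the Schur--Cohn Bezoutian is replaced by the Hermite--Bezout matrix, stable factorization on $\mathbb{T}$ is replaced by stable factorization on $\mathbb{R}$, and the contractive one-variable realization is replaced by a self-adjoint one. First I would establish a product lemma analogous to Lemma \ref{prod}: if $p_1$ and $p_2$ each admit a representation \eqref{RZP} with Hermitian matrices of size equal to their total degrees, then so does $p_1 p_2$ by taking block direct sums (which preserve Hermiticity). Combined with the standard density argument (real-zero polynomials of total degree at most $d$ with $p(0,0)=1$ form a closed subset of the finite-dimensional coefficient space, and the set of $d \times d$ Hermitian pairs is closed), this reduces the problem to a dense generic subclass. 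After a real linear change of coordinates in $(x_1,x_2)$ --- which preserves real-zeroness --- I may assume $\deg_2 p = d$ (so in the expansion $p(x_1,x_2) = p_0(x_1)+p_1(x_1)x_2+\cdots+x_2^d$ the top coefficient is the constant $1$), that $p$ is irreducible, and that $p(x_1,\cdot)$ has $d$ simple real roots for generic real $x_1$.

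Next I would form the companion matrix $\mathtt{C}(x_1)$ of the monic polynomial $x_2 \mapsto p(x_1,x_2)$, in analogy with \eqref{Cjdef2}. The real-zero assumption ensures that $\mathtt{C}(x_1)$ has only real eigenvalues for every real $x_1$. Then construct the $d \times d$ Hermite--Bezout matrix $H(x_1)$ associated with $p(x_1,\cdot)$, whose entries are polynomials in the $p_j(x_1)$ built from Newton's power sums of the roots. The classical Hermite criterion gives $H(x_1) \succ 0$ at every real $x_1$ where $p(x_1,\cdot)$ has $d$ simple real roots, and the intertwining identity $\mathtt{C}(x_1)^* H(x_1) = H(x_1)\mathtt{C}(x_1)$ holds for $x_1 \in \mathbb{R}$. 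The central technical ingredient is then the stable factorization for matrix polynomials positive semidefinite on $\mathbb{R}$, computable via SDP or a Riccati-equation solver as mentioned in the introduction: $H(x_1) = P(x_1)^* P(x_1)$ on $\mathbb{R}$, where $P$ is a matrix polynomial with $\det P$ nonvanishing in the closed upper half-plane.

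Setting $M(x_1) := P(x_1)\mathtt{C}(x_1)P(x_1)^{-1}$, the intertwining identity together with $H = P^*P$ implies that $M(x_1)$ is Hermitian for real $x_1$; an argument parallel to the Claim in the proof of Theorem \ref{main} --- showing that the left kernel vectors of $P$ at its zeros are left eigenvectors of $\mathtt{C}$, via the Bezoutian structure of $H$ --- gives that the only poles of $M$ in $\mathbb{C}$ are the real zeros of $p_0$, so $M$ is analytic in a neighborhood of $\mathbb{R}$. Since $M$ is similar to $\mathtt{C}$,
\[
p(x_1,x_2) \;=\; \det\bigl(x_2 I_d - \mathtt{C}(x_1)\bigr) \;=\; \det\bigl(x_2 I_d - M(x_1)\bigr),
\]
and a self-adjoint minimal realization of the Hermitian-valued rational function $M$ would yield Hermitian $A_1, A_2 \in \mathbb{C}^{d \times d}$ with $\det(I_d + x_1 A_1 + x_2 A_2) = p(x_1,x_2)$.

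The hardest step is this final one: the self-adjoint realization must produce Hermitian matrices of size \emph{exactly} $d$, rather than the larger naive bound $\deg_1 p + \deg_2 p$ that a direct mimicry of the bidisk proof would give. This is the real-line, Hermitian analog of Arov's contractive realization theorem on the disk, and it exploits the fact that the target size $d$ is already built into $H(x_1)$ and its stable factor $P(x_1)$: the Hermitian pencil $x_2 H(x_1) - H(x_1)\mathtt{C}(x_1)$ has determinant equal to $\det H(x_1) \cdot p(x_1,x_2)$, and a careful linearization exploiting $H = P^*P$ should cancel the extraneous factor $\det H(x_1) = |\det P(x_1)|^2$ to leave a size-$d$ linear Hermitian pencil whose determinant is $p$.
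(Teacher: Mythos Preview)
Your architecture mirrors the paper's first proof (companion matrix, Hermite matrix, stable factorization on $\mathbb{R}$, conjugate to a Hermitian-valued $M$), but two steps fail as written.

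First, the Hermite matrix $H(x_1)$ of $x_2 \mapsto p(x_1,x_2)$ is \emph{not} positive semidefinite on $\mathbb{R}$: the real-zero hypothesis controls roots along lines through the origin, not along horizontal lines $x_1=\text{const}$. For $p=1-x_1^2-x_2^2$ (real-zero), $p(2,x_2)=-3-x_2^2$ has nonreal roots and $H(2)$ is indefinite, so the stable factorization $H=P^*P$ is unavailable and the whole construction collapses. The paper fixes this by working instead with $\check p_{x_2}(t):=t^d p(1/t,x_2/t)$: its roots in $t$ are the reciprocals of the roots of $s\mapsto p(s,x_2 s)$, which are real precisely by the real-zero property in the direction $(1,x_2)$. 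The Hermite matrix of $\check p_{x_2}$ is then positive definite on $\mathbb{R}$ under the generic simple-zeros assumption, the companion matrix is already polynomial (so your stray remark about ``real zeros of $p_0$'' evaporates), and the endgame $p(x_1,x_2)=x_1^d\,\check p_{x_2/x_1}(1/x_1)$ lands directly on $\det(I_d+x_1A_1+x_2A_2)$.

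Second, your final step does not establish that $M$ is \emph{linear}. A self-adjoint minimal realization of $M$ would produce a pencil of size $d$ plus the McMillan degree of $M$, not $d$; and the ``linearization cancelling $\det H$'' sketch is not an argument. The paper's mechanism here is a short lemma of Hanselka (Lemma~\ref{polynomial_implies_linear}): if $M(s)$ is a matrix polynomial with Hermitian coefficients and $\det(tI_d-M(s))$ has total degree at most $d$, then $\deg M\le 1$ --- because if the top coefficient $B_k$ had $k>1$, comparing degrees in the characteristic-coefficient expansion forces every principal-minor sum of $B_k$ to vanish, so $B_k$ is nilpotent and, being Hermitian, zero. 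In the corrected setup $\det(tI_d-M(x_2))=\check p_{x_2}(t)$ has total degree exactly $d$, so $M(x_2)=-A_1-A_2x_2$ with $A_1,A_2$ Hermitian. (Incidentally, the paper's argument that $M$ has no poles is also simpler than the kernel/eigenvector route you outline: from $C^T H=HC$ and $H(x_2)=Q(\bar x_2)^*Q(x_2)$ one reads off $M(x_2)=Q(\bar x_2)^{*-1}C(x_2)^T Q(\bar x_2)^*$, which is regular on the open lower half-plane since $Q$ is invertible on the closed upper half-plane; together with the defining formula this makes $M$ entire.)
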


We will need two lemmata. The first one is simply a restatement of one of the results of \cite{Nuij}
in the non-homogeneous setting.

\begin{lem}\label{aux} Let $p$ be a real-zero polynomial of total degree 
$d$ and with $p(0,0)=1$. 
For every $\epsilon > 0$ there exists a real-zero polynomial $q$ of total degree 
$d$ and with $q(0,0)=1$ 
such that each coefficient of $q$ is within $\epsilon$ distance of the corresponding coefficient of $p$, 
and for every $x_2\in{\mathbb R}$ the one-variable polynomial $\check q_{x_2}$ defined via
$$\check q_{x_2}(t):= t^d q(1/t, x_2/t)$$
has only simple real zeros.
\end{lem}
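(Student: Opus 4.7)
The plan is to reduce the claim to Nuij's density theorem for strictly hyperbolic polynomials via homogenization. Let $P(x_0,x_1,x_2) := x_0^d\, p(x_1/x_0,x_2/x_0)$ denote the homogenization of $p$; then $P$ is homogeneous of degree $d$ with $P(1,0,0)=1$, and a direct change of variables (using $p(0,0)\neq 0$) shows that the real-zero condition on $p$ is equivalent to hyperbolicity of $P$ with respect to $e_0:=(1,0,0)$, i.e., the polynomial $t\mapsto P(te_0+w)$ has only real zeros for every $w\in\mathbb{R}^3$. Dually, for the homogenization $Q$ of any candidate $q$ one has $Q(t,1,x_2)=\check q_{x_2}(t)$, so the desired condition on $q$ --- that $\check q_{x_2}$ have only simple real zeros for every $x_2\in\mathbb{R}$ --- is implied by strict hyperbolicity of $Q$ with respect to $e_0$, applied at $w=(0,1,x_2)$.

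The main tool is Nuij's construction \cite{Nuij}: the operator $T_s := 1+s\,\partial_0$, where $\partial_0=\partial/\partial x_0$, sends polynomials hyperbolic in direction $e_0$ to polynomials hyperbolic in direction $e_0$, and for every $s>0$ the iterate $T_s^d$ turns an arbitrary degree-$d$ hyperbolic polynomial into a strictly hyperbolic one (the key one-variable fact being that if $f$ has only real roots then so does $f+sf'$, and each application of $T_s$ strictly decreases the multiplicity of every multiple root). Define $Q_s := T_s^d P$. Then $Q_s$ is homogeneous of degree $d$, strictly hyperbolic with respect to $e_0$ for every $s>0$, and its coefficients depend polynomially on $s$ and coincide with those of $P$ at $s=0$.

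Now normalize and dehomogenize. Set $c_s := Q_s(1,0,0)$; since $c_0=1$ and $c_s$ is polynomial in $s$, we have $c_s>0$ for all sufficiently small $s>0$. Put $\widetilde{Q}_s := c_s^{-1} Q_s$, still strictly hyperbolic in direction $e_0$, and let $q_s(x_1,x_2) := \widetilde{Q}_s(1,x_1,x_2)$. Then $q_s(0,0)=1$, and the top homogeneous part of $q_s$, equal to $c_s^{-1}\widetilde{Q}_s(0,x_1,x_2)$, reduces at $s=0$ to the nonzero degree-$d$ part of $p$, hence remains nonzero for small $s$; thus $q_s$ has total degree $d$. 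By the translation above, $\check{(q_s)}_{x_2}(t)=\widetilde{Q}_s(t,1,x_2)$ has $d$ distinct, and therefore only simple, real zeros for every $x_2\in\mathbb{R}$.

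Finally, the coefficients of $q_s$ depend continuously on $s$ with $q_0=p$, so choosing $s>0$ small enough forces every coefficient of $q_s$ to lie within $\epsilon$ of the corresponding coefficient of $p$; set $q := q_s$. The only substantive input is Nuij's theorem, which would be the main obstacle if one were to prove the lemma from scratch; everything else is routine bookkeeping between the homogeneous and affine settings together with a continuity argument in $s$.
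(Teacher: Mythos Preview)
Your overall strategy---homogenize to a hyperbolic form, invoke Nuij to approximate by a strictly hyperbolic form, then dehomogenize and normalize---is exactly the paper's. However, there is a genuine error in your explicit description of Nuij's construction: the operator $T_s = 1 + s\,\partial_0$ does \emph{not} preserve homogeneity. If $P$ is homogeneous of degree $d$ then $\partial_0 P$ is homogeneous of degree $d-1$, so $Q_s = T_s^{\,d} P = \sum_{k=0}^d \binom{d}{k} s^k \partial_0^{\,k} P$ is a sum of homogeneous terms of degrees $d, d-1, \ldots, 0$ and is not homogeneous for $s \neq 0$.

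This breaks the rest of the argument. The identity $\check{(q_s)}_{x_2}(t) = \widetilde{Q}_s(t,1,x_2)$ relies on $\widetilde{Q}_s$ being the homogenization of $q_s$; once $Q_s$ is inhomogeneous this fails (a direct computation gives $t^d Q_s(1,1/t,x_2/t) = \sum_k \binom{d}{k}(st)^k(\partial_0^{\,k}P)(t,1,x_2)$, not $Q_s(t,1,x_2)$), so you cannot conclude that $\check{(q_s)}_{x_2}$ has only simple real zeros. For the same reason you have not verified that $q_s$ is a real-zero polynomial: hyperbolicity of $Q_s$ in the direction $e_0$ controls the family $t \mapsto Q_s(t,a_1,a_2)$, whereas the real-zero property of $q_s$ concerns $t \mapsto Q_s(1,tx_1,tx_2)$, and without homogeneity these are unrelated. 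The fix is easy: either cite Nuij's density theorem as a black box---it asserts precisely that strictly hyperbolic \emph{homogeneous} forms of a given degree are dense among the hyperbolic ones, which is what the paper does---or use Nuij's actual degree-preserving operators, which are of the form $1 + s\, x_j\, \partial_0$ with $j \neq 0$ (the factor $x_j$ restores the degree lost to $\partial_0$), iterated $d$ times and taken in product over $j$.
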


\begin{proof} Let ${\mathbf p}(x_0,x_1,x_2):= x_0^d p(x_1/x_0,x_2/x_0)$. 
Then ${\mathbf p}$ is a degree $d$ homogeneous polynomial in three variables that is 
{\em hyperbolic} with respect to $e=(1,0,0)$, 
which means that ${\mathbf p}(e)\neq 0$ and for every $(x_0,x_1,x_2)\in{\mathbb R}^3$ 
the one-variable polynomial $t \to {\mathbf p}(x_0-t,x_1,x_2)$ has only real zeroes. 
By a result of \cite{Nuij}, the polynomial ${\mathbf p}$ can be approximated 
arbitrarily close, in the sense of coefficients, by a degree $d$ homogeneous polynomial ${\mathbf q}$ 
which is {\em strictly hyperbolic} with respect to $e$; 
that is, ${\mathbf q}$ is hyperbolic with respect to $e$, 
and for every $(x_0,x_1,x_2)\in{\mathbb R}^3$ with $(x_1,x_2)\neq (0,0)$ 
the zeros of $t \to {\mathbf q}(x_0-t,x_1,x_2)$ are simple. 
But then $q(x_1,x_2):={\mathbf q}(1,x_1,x_2)/{\mathbf q}(1,0,0)$ has the desired property.
Notice that while a priori the total degree of $q$ is at most $d$, it will be actually equal to $d$ 
if we choose $\epsilon$ small enough.
\end{proof}

The following result is due to C. Hanselka \cite{Ha}. For the sake of completeness, we include a proof.
\begin{lem} \label{polynomial_implies_linear}
Let $M$ be a $d \times d$ matrix-valued polynomial in one variable with Hermitian coefficients,
and assume that the polynomial $\det(tI_d - M(s))$ has total degree at most $d$; then $M$ is linear
(i.e., $\deg M \le 1$).
\end{lem}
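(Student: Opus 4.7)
The plan is to exploit the Hermitian hypothesis through Newton's identities. Write $M(s) = M_0 + M_1 s + \cdots + M_k s^k$ with each $M_i$ Hermitian and $M_k \neq 0$; the goal is to force $k \le 1$.

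First I would translate the total-degree hypothesis into a degree bound on the coefficients of the characteristic polynomial. Expanding
$$\det(tI_d - M(s)) = t^d - e_1(M(s))\, t^{d-1} + e_2(M(s))\, t^{d-2} - \cdots \pm e_d(M(s)),$$
where $e_j(M(s))$ is the $j$-th elementary symmetric polynomial of the eigenvalues of $M(s)$, the hypothesis $\tdeg \det(tI_d - M(s)) \le d$ is equivalent to the bound $\deg_s e_j(M(s)) \le j$ for each $j = 1, \ldots, d$.

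Next I would transfer this bound to the power sums $p_j(s) := \operatorname{tr}(M(s)^j)$ via Newton's identities, which recursively express $p_j$ as a polynomial in $e_1,\ldots,e_j$ with $e_i$ of weight $i$; a straightforward induction then yields $\deg_s p_j(s) \le j$ for every $j \ge 1$. I only need the case $j = 2$, giving $\deg_s \operatorname{tr}(M(s)^2) \le 2$. On the other hand, the leading $s$-term of $M(s)^2$ is $M_k^2\, s^{2k}$, so $\operatorname{tr}(M(s)^2) = \operatorname{tr}(M_k^2)\, s^{2k} + (\text{lower order})$. This is where Hermiticity enters decisively: for Hermitian $M_k$ one has $\operatorname{tr}(M_k^2) = \sum_{i,j}|(M_k)_{ij}|^2 > 0$ unless $M_k = 0$, so $\deg_s \operatorname{tr}(M(s)^2) = 2k$ exactly. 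Combined with the previous bound, $2k \le 2$, i.e., $k \le 1$. The only real obstacle is the bookkeeping in Newton's identities; without the Hermitian hypothesis (e.g.\ for a nilpotent leading coefficient $M_k$) the trace would vanish in the leading order and the argument would collapse.
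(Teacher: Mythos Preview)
Your argument is correct. Both your proof and the paper's hinge on the same skeleton: the total-degree hypothesis forces $\deg_s e_j(M(s))\le j$, and this in turn pins down the leading matrix coefficient $M_k$ enough that Hermiticity kills it. The route diverges at the middle step. The paper stays with the elementary symmetric functions: since $e_j(M(s))$ equals the sum of $j\times j$ principal minors of $M(s)$, its $s^{kj}$-coefficient is the sum of $j\times j$ principal minors of $M_k$; if $k>1$ these all vanish, so $M_k$ is nilpotent, hence zero by Hermiticity. You instead pass to power sums via Newton's identities and use only $j=2$, reading off $\operatorname{tr}(M_k^2)=0$ from the leading term of $\operatorname{tr}(M(s)^2)$. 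Your version is a bit more economical in that a single index suffices, at the small cost of invoking Newton's identities; the paper's version avoids that machinery but needs the observation for every $j$ to reach nilpotency. Either way the Hermitian hypothesis enters at exactly the same point, and your closing remark about nilpotent $M_k$ is precisely the obstruction the paper's argument isolates.
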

\begin{proof}
Let
$$
\det(tI_d - M(s)) = t^d + p_1(s)t^{d-1} + \cdots + p_d(s),
$$
where $p_j$ is a polynomial of degree at most $j$.
Assume that $M$ is a polynomial of degree $k$, and write $-M(s) = B_0 + \cdots + B_k s^k$. The sum
of $j \times j$ principal minors in $-M(s)$ is exactly $p_j(s)$; 
therefore the coefficient of $s^{kj}$ in $p_j(s)$ is the sum of $j \times j$ principal minors in $B_k$. 
But $\deg p_j \leq j$ for all $j$, 
hence if $k>1$ we conclude that the sum of $j \times j$ principal minors in $B_k$ is zero for all $j>0$. 
It follows that $B_k$ is nilpotent. Since $B_k$ is also Hermitian, it must be zero, a contradiction.
\end{proof}

We will present two closely related proofs of Theorem \ref{RZ}:
the first proof uses the Hermite matrix
(considered in the context of real-zero polynomials and determinantal representations
in \cite{Hen10} and in \cite{NPT}),
whereas the second proof uses intertwining polynomials and the Bezoutian
(considered in this context in \cite{Vppf} and in \cite{PV,KPV}).

\begin{proof}[First Proof of Theorem \ref{RZ}]
We first claim that if we can establish the existence of a required determinantal representation
for a dense subset of real-zero polynomials of total degree $d$ and with constant term $1$, then we are done\footnote{
This was previously noticed in \cite[Lemma 8]{Sp05} and \cite[Lemma 3.4]{PV}}.
Indeed, assume 
that we have real-zero polynomials $p^{(n)}$, $n\in{\mathbb N}$, 
of total degree $d$ with $p^{(n)}(0,0)=1$,  
so that the sequence $\{p^{(n)}\}_{n\in{\mathbb N}}$ converges to $p$
and so that
there exist Hermitian $d\times d$ matrices $A_1^{(n)}$ and $A_2^{(n)}$ with
$p^{(n)}(x_1,x_2)=\det(I_d + x_1 A_1^{(n)} + x_2 A_2^{(n)})$. Let $$\mu := \min \{ |t|\colon \ p(t,0)p(0,t) = 0 \}.$$ 
Clearly, $\mu>0$. 
Then for $n$ large enough the spectra of $A_1^{(n)}$ and $A_2^{(n)}$ lie in the interval $(-2\mu^{-1},2\mu^{-1})$. 
Since the spectral radius of an Hermitian matrix coincides with its operator $(2,2)$ norm, the matrices
$A_1^{(n)}$ and $A_2^{(n)}$ have norms bounded by $2\mu^{-1}$, 
and therefore the sequence $\{(A_1^{(n)}, A_2^{(n)}) \}_{n\in{\mathbb N}}$, has a limit point $(A_1,A_2)$. 
Then we get that $p(x_1,x_2)=\det(I_d + x_1A_1+x_2A_2)$, with Hermitian $d\times d$  matrices $A_1$ and $A_2$, as desired.

Given $p$, we introduce
$$\check p_{x_2}(t):= t^d p(1/t, x_2/t) = t^d + p_1(x_2) t^{d-1} + \cdots + p_d(x_2). $$ 
One easily observes that $\deg p_j \le j$, $j=1,\ldots, d$, 
and that for every $x_2\in{\mathbb R}$ the polynomial $\check p_{x_2}$ has only real zeros. 
Furthermore, we may assume by the previous paragraph and by Lemma \ref{aux}
that for every $x_2\in{\mathbb R}$ the polynomial $\check p_{x_2}$ has  only simple zeros.

Let $C(x_2)$ be the companion matrix
$$C(x_2) = \begin{bmatrix} 0 & \cdots  & 0 & -p_d(x_2) \cr 
1 &  & 0 & -p_{d-1}(x_2) \cr & \ddots &  & \vdots \cr 0 & & 1 & -p_1(x_2) \end{bmatrix} . $$
Then $$\check p_{x_2}(t) = \det (tI_d -C(x_2)).$$
Denote the zeros of $\check p_{x_2}$ by $\lambda_1(x_2), \ldots, \lambda_d(x_2)$, 
and let $s_j(x_2)$ be their $j$th Newton sum:
$$s_j(x_2) = \sum_{k=1}^d \lambda_k(x_2)^j,\quad j=0, 1,\ldots.$$
As is well known, $s_j(x_2)$ can be expressed in terms of $p_j(x_2)$, as follows
$$s_0(x_2)=d,\quad s_1(x_2)= -p_1(z),\quad 
s_j(x_2) = -jp_j(x_2)-\sum_{k=1}^{j-1} p_k(x_2) s_{j-k}(x_2),\quad j=2,\ldots,d. $$
Note that $s_j$ is a polynomial of degree $\le j$, $j=0,\ldots,d$.
We let $H(x_2)$ be the {\em Hermite matrix} of $\check p_{x_2}$, namely (see, e.g., \cite{KN36}) 
the Hankel matrix whose entries are the Newton sums of the zeros of $\check p_{x_2}$: 
$$H(x_2) = [s_{i+j}(x_2)]_{i,j=0,\ldots,d-1}.$$ 
Clearly,  $H$ is a matrix polynomial of degree at most $2d$. E.g., for $d=2$ we have
$$H(x_2) = \begin{bmatrix} 2 & -p_1(x_2)\cr -p_1(x_2) & p_1(x_2)^2-2 p_2(x_2) \end{bmatrix}.$$ 
Since all the zeros of $\check p_{x_2}$ are real and simple for real $x_2$, 
we have that $H(x_2) > 0$, $x_2 \in {\mathbb R}$. 
This is well known and it follows immediately from
\begin{equation} \label{Vander_fact}
H(x_2) = V(x_2)^T V(x_2),
\end{equation}
where $V(x_2)$ is the (real) Vandermonde matrix
\begin{equation} \label{Vander}
V(x_2) = [\lambda_{k+1}(x_2)^{j}]_{k,j=0,\ldots,d-1}.
\end{equation}
In addition, one may easily check (e.g., using \eqref{Vander_fact}) that
\begin{equation}\label{CHHC} C(x_2)^T H(x_2) = H(x_2) C(x_2). \end{equation}

By the positive definiteness of $H(x_2)$ for all real $x_2$, we may factor $H(x_2)$ as
\begin{equation}\label{fact} H(x_2) = Q(x_2)^* Q(x_2),\quad x_2\in\mathbb{R}, \end{equation} 
where $Q(x_2)$ is a matrix polynomial of degree $d$ and $Q(x_2)$ is invertible for $\operatorname{Im} x_2 \ge 0$; 
see, for instance, \cite{RR}.
We now let 
$$M(x_2) = Q(x_2) C(x_2) Q(x_2)^{-1},$$ 
and obtain that
$$
\check p_{x_2}(t) = \det (tI_d - M(x_2)).
$$
Note that $M(x_2)=M(x_2)^*$ for $x_2 \in {\mathbb R}$. 
Indeed, \eqref{CHHC} implies that 
$$C(x_2)^*Q(x_2)^*Q(x_2) = Q(x_2)^*Q(x_2)C(x_2), \quad x_2 \in {\mathbb R}.$$ 
Multiplying on the left with $Q(x_2)^{*-1}$ and on the right with $Q(x_2)^{-1}$, 
yields that $M(x_2) = M(x_2)^*$, $x_2\in {\mathbb R}$.

Next, we claim that the rational matrix function $M(x_2)$ is in fact a matrix polynomial. 
The only possible poles arise from the zeros of $Q(x_2)$.
Let $a$ be a zero of $Q(x_2)$. Then $\operatorname{Im} a <0$.
We rewrite \eqref{fact} as 
$$
H(x_2) = Q(\bar x_2)^* Q(x_2)
$$
for all $x_2 \in {\mathbb C}$, and substitute in \eqref{CHHC}, obtaining
\begin{equation}\label{CHHC2} 
C(x_2)^T Q(\bar x_2)^* = Q(\bar x_2)^* Q(x_2) C(x_2) Q(x_2)^{-1} = Q(\bar x_2)^* M(x_2),
\end{equation}
for all $x_2\in{\mathbb C}$.
Since $Q(\bar a)$ is invertible, we conclude that
$$M(x_2) = Q(\bar x_2)^{*-1} C(x_2)^T Q(\bar x_2)^*,$$ 
is regular at $a$, i.e., $a$ is not a pole of $M(x_2)$.



It follows now from Lemma \ref{polynomial_implies_linear} that $\deg M \le 1$, i.e.,
we can write 
$$M(x_2) = -A_1 - A_2x_2,$$ 
where $A_1$ and $A_2$ are $d\times d$ Hermitian matrices. Then 
$$\check p_{x_2}(t) = \det (tI_d + A_1 + A_2x_2),$$ 
and thus
$$
p(x_1,x_2)= x_1^d \check p_{\frac{x_2}{x_1}}\Big(\frac{1}{x_1}\Big)= \det(I_d + x_1A_1+x_2A_2).
$$
\end{proof}

Note that the proof provides a constructive way to find a representation \eqref{RZP}. 
We illustrate this with an example.

\begin{ex} Let $p(x,y) = 1+10y+4x -y^2 -2xy -x^2.$ Then $\check p_y(t) = t^2 + (10y+4)t +(-1-2y-y^2).$ We get
$$H(y) = \begin{bmatrix} 2 & -10y-4\cr -10 y-4 & 102y^2+84y+18 \end{bmatrix}.$$
Factoring as in \eqref{fact} we find that
$$ Q(y) = \begin{bmatrix} \sqrt{2} &-2\sqrt{2}-5\sqrt{2}y\cr 0 & \sqrt{10}( 1+ y\frac{11+3i}{5}) \end{bmatrix}. $$ 
Then
$$ M(y) = Q(y)C(y)Q(y)^{-1} = \begin{bmatrix} -5y-2 & \sqrt{5}(1+\frac{y}{5}(11-3i)) \cr 
\sqrt{5}(1+\frac{y}{5}(11+3i)) & -5y-2 \end{bmatrix} . $$
Ultimately, we find that 
$$ p(x,y)= \det\left( I_2 + x \begin{bmatrix} 2 & -\sqrt{5} \cr -\sqrt{5} & 2 \end{bmatrix} + y \begin{bmatrix} 5 & \frac{\sqrt{5}}{5}(-11+3i)  \cr \frac{\sqrt{5}}{5}(-11-3i) & 5 \end{bmatrix}\right).$$
By the way, the polynomial $p$ was constructed using $A_1=\begin{bmatrix} 1 & 2\cr 2 & 3 \end{bmatrix}$ and $A_2=\begin{bmatrix} 4 & 5 \cr 5 & 6 \end{bmatrix}$.
\end{ex}

Before presenting a second proof of Theorem \ref{RZ}, we introduce a definition.
Let $p$ be a real-zero polynomial of total degree $d$, $p(0,0)=1$, 
and let $q$ be a real-zero polynomial of total degree less than $d$, $q(0,0) > 0$.
We define
$$\check p_{x_2}(t):= t^d p(1/t, x_2/t),\quad \check q_{x_2}(t):= t^{d-1} q(1/t, x_2/t),$$
and let, for $x_2 \in {\mathbb R}$, 
$\lambda_1(x_2) \leq \cdots \leq \lambda_d(x_2)$ and $\mu_1(x_2) \leq \cdots \leq \mu_{d-1}(x_2)$ be the zeros
of $\check p_{x_2}$ and of $\check q_{x_2}$, respectively, counting multiplicities.
We will say that $q$ {\em interlaces} $p$ if 
\begin{equation} \label{interlace}
\lambda_1(x_2) \leq \mu_1(x_2) \leq \lambda_2(x_2) \leq \cdots
\leq \lambda_{d-1}(x_2) \leq \mu_{d-1}(x_2) \leq \lambda_d(x_2)
\end{equation}
for all $x_2 \in {\mathbb R}$.
We will say that $q$ {\em strictly interlaces} $p$ if all the zeros of $\check p_{x_2}$ are simple
and strict inequalities hold in \eqref{interlace}, for all $x_2 \in {\mathbb R}$.

As an example, let $p$ be a real-zero polynomial of total degree $d$, $p(0,0)=1$, 
and let $(x^0_1,x^0_2)$ belong to the connected component of $(0,0)$
in $\{(x_1,x_2)\in{\mathbb R}^2 \colon p(x_1,x_2)>0\}$.
We set ${\mathbf p}(x_0,x_1,x_2)=x_0^d p(x_1/x_0,x_2/x_0)$ and define
$$
q(x_1,x_2) = \left.\frac{d}{ds}\,{\mathbf p}(1+s,x_1+sx^0_1,x_2+sx^0_2)\right|_{s=0};
$$
$q$ is called the Renegar derivative of $p$ with respect to $(x^0_1,x^0_2)$
and it interlaces $p$; see \cite{Re06,NPS10}.
The interlacing is strict if all the zeros of $\check p_{x_2}$ are simple
for all $x_2 \in {\mathbb R}$.
Notice that for $(x^0_1,x^0_2)=(0,0)$, we have simply
$\check q_{x_2} = \left(\check p_{x_2}\right)'$.

\begin{proof}[Second Proof of Theorem \ref{RZ}]
We assume as in the first proof that $p$ is a real-zero polynomial of total degree $d$, $p(0,0)=1$, such that
the polynomial $\check p_{x_2}$ has only simple zeros for all $x_2 \in {\mathbb R}$.
We choose a real-zero polynomial $q$ of total degree less than $d$, $q(0,0) \neq 0$,
that strictly interlaces $p$.
We let $B(x_2)$ be the {\em Bezoutian} of the polynomials $\check q_{x_2}$ and $\check p_{x_2}$, namely
(see, e.g., \cite{KN36}) 
$$B(x_2) = [b_{ij}(x_2)]_{i,j=0,\ldots,d-1},$$
where $b_{ij}(x_2)$ are determined from
\begin{equation} \label{bezoutian}
\frac{\check q_{x_2}(t) \check p_{x_2}(s) - \check q_{x_2}(s) \check p_{x_2}(t)}{t-s}
= \sum_{i,j=0,\ldots,d-1} b_{ij}(x_2) t^is^j.
\end{equation}
It is easily seen that $b_{ij}(x_2)$ are polynomials (over ${\mathbb Z}$) 
in the coefficients of $\check p_{x_2}$ and $\check q_{x_2}$,
hence polynomials in $x_2$, i.e., $B$ is a matrix polynomial.
The defining equation \eqref{bezoutian} can be conveniently rewritten as
\begin{equation} \label{Bezout-Vander1}
v_{d-1}(t) B(x_2) v_{d-1}(s)^T
= \frac{\check q_{x_2}(t) \check p_{x_2}(s) - \check q_{x_2}(s) \check p_{x_2}(t)}{t-s},
\end{equation}
and taking the limit $t \to s$,
\begin{equation} \label{Bezout-Vander2}
v_{d-1}(s) B(x_2) v_{d-1}(s)^T
= \left(\check q_{x_2}\right)'(s) \check p_{x_2}(s) - \check q_{x_2}(s) \left(\check p_{x_2}\right)'(s),
\end{equation}
where
$$
v_{d-1}(t) = \begin{bmatrix} 1 & t & \cdots & t^{d-1}\end{bmatrix}.
$$
Since the zeros of $\check p_{x_2}$ and $\check q_{x_2}$ are real, simple, and alternate for real $x_2$, 
we have that $B(x_2) > 0$, $x_2 \in {\mathbb R}$. 
This well known fact can be seen immediately by using \eqref{Bezout-Vander1}--\eqref{Bezout-Vander2} to compute
$$
V(x_2)B(x_2)V(x_2)^T = \operatorname{diag} \left(
- \check q_{x_2}(\lambda_i(x_2)) \left(\check p_{x_2}\right)'(\lambda_i(x_2)) \right)_{i=1,\ldots,d},
$$
where $V(x_2)$ is the Vandermonde matrix \eqref{Vander} based at the zeros of $\check p_{x_2}$.
In addition, one may easily check 
\begin{equation}\label{CHHC-Bezout} C(x_2) B(x_2) = B(x_2) C(x_2)^T \end{equation}
--- e.g., multiplying both sides by $V(x_2)$ from the left and $V(x_2)^T$ from the right,
and using \eqref{Bezout-Vander1}--\eqref{Bezout-Vander2}.

By the positive definiteness of $B(x_2)$ for all real $x_2$, we may factor $B(x_2)$ as
\begin{equation}\label{fact-Bezout} B(x_2) = P(x_2) P(x_2)^*,\quad x_2\in\mathbb{R}, \end{equation} 
where $P(x_2)$ is a matrix polynomial and $P(x_2)$ is invertible for $\operatorname{Im} x_2 \ge 0$,
and we let 
$$M(x_2) = P(x_2)^{-1} C(x_2) P(x_2),$$ 
and obtain that
$$
\check p_{x_2}(t) = \det (tI_d - M(x_2)).
$$
As in the first proof of the theorem,
\eqref{CHHC-Bezout} and \eqref{fact-Bezout} imply that 
$M(x_2) = M(x_2)^*$, $x_2\in {\mathbb R}$,
and that the rational matrix function $M(x_2)$ is regular at a zero $a$ of $P(x_2)$,
so that it is in fact a matrix polynomial\footnote{
Alternatively, we can prove that a zero $a$ of $P(x_2)$ is not a pole of $M(x_2)$ similarly to the proof of the claim
in the proof of Theorem \ref{main}.
It is well known that $\det B(a) = 0$ iff the polynomials $\check p_{a}$ and $\check q_{a}$ have a common
zero $\lambda$; let us assume that $\lambda$ is a simple zero of both $\check p_{a}$ and $\check q_{a}$,
then it is also well known that the left kernel of $B(a)$ is spanned by $v_{d-1}(\lambda)$
(all these facts follow quite easily from \eqref{Bezout-Vander1}--\eqref{Bezout-Vander2}).
Since $B(a) = P(a) P(\bar a)^*$, and since $v_{d-1}(\lambda) C(a) = \lambda v_{d-1}(\lambda)$,
it follows that the one-dimensional left kernel of $P(a)$ is the left eigenspace of $C(a)$,
implying as in the proof of Theorem \ref{main} that $a$ is not a pole of $M(x_2)$.}.
It follows from Lemma \ref{polynomial_implies_linear} that $M$ is linear:
$$M(x_2) = -A_1 - A_2x_2,$$ 
where $A_1$ and $A_2$ are $d\times d$ Hermitian matrices, and then 
$$
p(x_1,x_2)= x_1^d \check p_{\frac{x_2}{x_1}}\Big(\frac{1}{x_1}\Big)= \det(I_d + x_1A_1+x_2A_2).
$$
\end{proof}

This second proof of Theorem \ref{RZ} is of course constructive as well as soon as we choose a strictly interlacing
polynomial $q$.

We notice also that the algebro-geometrical proof of Theorem \ref{RZ} given in \cite{Vppf} and in \cite{PV} also
uses an interlacing polynomial $q$, and yields a determinantal representation with
$$
q(x_1,x_2) = c \operatorname{adj}(I+x_1A_1+x_2A_2) \, c^T,
$$
where $\operatorname{adj}$ denotes the classical adjoint or adjugate matrix (the matrix of cofactors) and $c \in {\mathbb C}^{1 \times d}$.
It would be interesting to see whether this relation holds for the determinantal representation constructed 
in the second proof of Theorem \ref{RZ} above (meaning that the two constructions are essentially equivalent,
despite using quite different methods).

\begin{rem} 
Note that for $d=2$ we can always convert a representation $p(x_1,x_2)=\det (I_2+ x_1A_1 + x_2A_2)$ 
with $A_1$ and $A_2$ Hermitian, 
to one with real symmetric $A_1$ and $A_2$. 
Indeed, write $A_1=UDU^*$, with $U$ unitary and $D$ diagonal, 
and consider $U^*A_2U$ which has a complex $(1,2)$ entry with, say, argument $\theta$. 
Then letting $V=\small \begin{bmatrix} 1 & 0 \cr 0 & e^{i\theta} \end{bmatrix}$ 
and $\hat A_1 = D = VDV^*, \hat A_2 = VUA_2U^*V^* \in {\mathbb R}^{2\times 2}$, 
we obtain  $p(x_1,x_2)=\det (I_2+ x\hat A_1 + x_2\hat A_2)$, as desired.
\end{rem}

\begin{rem}\label{rem:p-square} (See \cite[Section 1.4]{RG95} and \cite[Lemma 2.14]{NT1}.)
From the representation as in Theorem \ref{RZ}, we may represent $p(x_1,x_2)^2$ as
\begin{equation}\label{pxy2} p(x_1,x_2)^2=\det(I_{2d} + x_1 \alpha_1 + x_2 \alpha_2 ), \end{equation}
where $\alpha_1=\alpha_1^T, \alpha_2=\alpha_2^T \in {\mathbb R}^{2d \times 2d}$. 
Indeed, with $A_1$ and $A_2$ as in Theorem \ref{RZ}, we write
$$ A_1=A_{1R} + i A_{1I} , A_2 = A_{2R} + i A_{2I}, $$
where $A_{1R}, A_{2R}, A_{1I}, A_{2I} \in {\mathbb R}^{d\times d}.$ 
It is easy to check that since $A_1$ and $A_2$ are Hermitian, $A_{1R}$, $A_{2R}$ are symmetric 
and $A_{1I}$, $A_{2I}$ are skew-symmetric. 
Let now 
$$\alpha_1 = \begin{bmatrix} A_{1R} & A_{1I}\cr -A_{1I} & A_{1R} \end{bmatrix},\qquad 
\alpha_2 = \begin{bmatrix} A_{2R} & A_{2I}\cr -A_{2I} & A_{2R} \end{bmatrix}, $$
and \eqref{pxy2} follows. Indeed, using 
$$U=\frac{1}{\sqrt{2}}\begin{bmatrix} I_d & I_d \cr iI_d & -i I_d \end{bmatrix}, $$ 
it is easy to check that
$$ U \begin{bmatrix} A_1 & 0 \cr 0 & A_1^T \end{bmatrix} U^* = \alpha_1,\qquad 
U \begin{bmatrix} A_2 & 0 \cr 0 & A_2^T \end{bmatrix} U^* = \alpha_2. $$
\end{rem}

\end{document}